\documentclass[11 pt,final]{article}
\usepackage{amsmath,amssymb,amsthm,amscd,amsfonts}
\usepackage[hmargin=1.25in, vmargin=1in]{geometry}
\usepackage[font=small,format=plain,labelfont=bf,up,textfont=it,up]{caption}
\usepackage{subcaption}
\usepackage{verbatim}
\usepackage{url}
\usepackage{calc}
\usepackage{graphicx}
\usepackage{multicol}
\usepackage{pstricks}
\usepackage{float}
\usepackage{overpic}
\usepackage[shortlabels]{enumitem}

\usepackage{tikz, tikz-cd}

\usepackage{listings}

\long\def\/*#1*/{}

\setlength{\parindent}{0in}
\setlength{\parskip}{1.8ex}

\newcommand{\urlwofont}[1]{\urlstyle{same}\url{#1}}

\newcommand{\nc}{\newcommand}
\nc{\nt}{\newtheorem}
\nc{\dmo}{\DeclareMathOperator}

\theoremstyle{plain}
\newtheorem{theorem}{Theorem}[section]
\newtheorem{maintheorem}{Theorem}
\newtheorem{proposition}[theorem]{Proposition}
\newtheorem{lemma}[theorem]{Lemma}

\newtheorem{corollary}[theorem]{Corollary}

\theoremstyle{definition}

\theoremstyle{remark}

\newtheorem{example}[theorem]{Example}



\dmo{\SMod}{SMod}
\dmo{\PMod}{PMod}
\dmo{\SHomeo}{SHomeo}
\dmo{\SI}{\mathcal{SI}}
\dmo{\SSp}{SSp}
\dmo{\PSp}{PSp}




\newcommand\Z{\ensuremath{\mathbb{Z}}}

\newcommand\N{\ensuremath{\mathbb{N}}}




\nc{\p}[1]{\noindent {\bf #1.}}
\nc{\margin}[1]{\marginpar{\scriptsize #1}}


\nc{\PartialIBases}{\mathfrak{IB}}
\nc{\PartialIBasesEx}{\widehat{\mathfrak{IB}}}
\nc{\PartialBases}{\mathfrak{B}}
\nc{\Building}{\mathfrak{T}}
\nc{\height}{\ensuremath{\text{ht}}}
\nc{\Poset}{\mathfrak{P}}
\nc{\Field}{\mathbb{F}}
\nc{\Link}{\ensuremath{\text{Link}}}
\nc{\Star}{\ensuremath{\text{Star}}}

\nc{\SymTorelli}{\ensuremath{\mathcal{SI}}}
\nc{\BTorelli}{\ensuremath{\mathcal{BI}}}
\dmo{\Braid}{\ensuremath{B}}
\dmo{\PureBraid}{\ensuremath{PB}}
\nc{\Hyper}{\ensuremath{\iota}}

\nc{\BigFreeProd}{\mathop{\mbox{\Huge{$\ast$}}}}

\nc{\Quotient}{\ensuremath{\mathcal{Q}}}
\nc{\QuotientEx}{\ensuremath{\widehat{\mathcal{Q}}}}

\nc{\Presentation}[2]{\ensuremath{\text{$\langle #1$ $|$ $#2 \rangle$}}}
\nc{\SpGen}{\ensuremath{S_{\text{Sp}}}}
\nc{\SpRel}{\ensuremath{R_{\text{Sp}}}}
\nc{\QGen}{\ensuremath{S_{\mathcal{Q}}}}
\nc{\QRel}{\ensuremath{R_{\mathcal{Q}}}}
\nc{\PBs}{\ensuremath{T}}
\nc{\Qs}{\ensuremath{\overline{s}}}

\dmo{\PB}{PB}
\nc{\BIredg}{\mathcal{BI}_{2g+1}^{\text{red}}}
\nc{\BI}{\mathcal{BI}}
\dmo{\D}{D}
\dmo{\Stab}{Stab}
\dmo{\Surger}{Surger}
\nc{\I}{\mathcal{I}}

\nc{\spanmap}{span}

\nc{\genbygen}[2]{\premonoid{#1}{#2}}
\nc{\premonoid}[2]{#1 \circledcirc #2}
\nc{\monoid}[2]{#1 \odot #2}

\nc{\G}{\Gamma}
\nc{\raag}{A_\Gamma}
\nc{\racg}{W_\G}
\nc{\raagdelt}{A_\Delta}
\dmo{\Aut}{Aut}
\dmo{\Out}{Out}
\nc{\Autraag}{\Aut(\raag)}
\nc{\Outraag}{\Out(\raag)}
\nc{\diag}{D_\G}
\nc{\Autraagdelt}{\Aut(A_\Delta)}
\nc{\glk}{\GL(k,\mathbb{Z})}
\nc{\GLn}{\GL(n,\mathbb{Z})}
\nc{\GLnt}{\GL(n,\mathbb{Z} / 2)}
\nc{\glkdelt}{\GL(k |\Delta| ,\mathbb{Z})}
\nc{\gldelt}{\GL(|\Delta| ,\mathbb{Z})}
\nc{\zkdelt}{\mathbb{Z}^{k|\Delta|}}
\nc{\join}{\mathcal{J}}
\nc{\pc}{\mathrm{PC}}
\dmo{\lk}{lk}
\dmo{\st}{st}
\dmo{\Inn}{Inn}

\nc{\pia}{\Pi \mathrm{A}}
\nc{\piaG}{\pia_\G}
\nc{\ppia}{\mathrm{P \Pi A}}
\nc{\ppiaG}{\mathrm{P \Pi A}_\G}
\nc{\epia}{\mathrm{E \Pi A}}
\nc{\epiaG}{\mathrm{E \Pi A}_\G}
\nc{\ptor}{\mathcal{PI}}
\nc{\ptorG}{\mathcal{PI}_\G}
\nc{\CGi}{C_\G(\iota)}
\nc{\pbc}{\mathfrak{B}^\pi}
\dmo{\Cay}{Cay}
\dmo{\rev}{rev}
\nc{\Autfn}{\Aut(F_n)}
\dmo{\supp}{supp}
\dmo{\rk}{\mathrm{rk}}
\dmo{\PCT}{\mathrm{PCT}(\raag)}
\dmo{\PCTo}{\overline{\mathrm{PCT}}(\raag)}

\dmo{\HOutn}{\mathrm{HOut}(F_n)}
\dmo{\STn}{\mathcal{ST}(n)}
\dmo{\HMn}{\mathrm{HM}_n(\Z)}

\newcommand{\GL}{\text{GL}}

\newcommand{\sF}{\mathcal{F}}

\newcommand{\set}[1]{\left\{#1\right\}}

\renewcommand{\mod}[1]{\ (\text{mod } #1)}

\DeclareMathOperator{\minnorm}{min}
\DeclareMathOperator{\mult}{mult}

\title{Observed periodicity related to the four-strand Burau representation}
\author{Neil J. Fullarton and Richard Shadrach}

\begin{document}
 \maketitle
 \begin{abstract}
  A long-standing open problem is to determine for which values of $n$ the Burau representation $\Psi_n$ of the braid group $B_n$ is faithful. Following work of Moody, Long--Paton, and Bigelow, the remaining open case is $n=4$. One criterion states that $\Psi_n$ is unfaithful if and only if there exists a pair of arcs in the $n$-punctured disk $D_n$ such that a certain associated polynomial is zero. In this paper, we use a computer search to show that there is no such arc-pair in $D_4$ with 2000 or fewer intersections, thus certifying the faithfulness of $\Psi_4$ up to this point. We also investigate the structure of the set of arc-pair polynomials, observing a striking periodicity that holds between those that are, in some sense, `closest' to zero. This is the first instance known to the authors of a deeper analysis of this polynomial set.
 \end{abstract}

 \section{Introduction}
  Artin's braid group $B_n$ appears throughout mathematics in many guises, in large part due to its fundamental connection to the motion of sets of particles in the plane. The group $B_n$ enjoys many desirable properties, such as finite-presentability and linearity. The latter was established via the faithful \emph{Lawrence--Krammer representation} of $B_n$; however, a long-standing open problem is for which values of $n$ the related \emph{Burau representation} $\Psi_n$ of the braid group is faithful. It is known to be faithful for $n \leq 3$ and not faithful for $n \geq 5$. In this paper, we are concerned with the question of faithfulness when $n=4$, the remaining open case. A non-trivial element in the kernel of $\Psi_4$ would give a likely candidate for showing that the Jones polynomial cannot detect the unknot, as noted by Bigelow \cite{Big02}.

The Burau representation's lack of faithfulness for large $n$ was shown by Moody \cite{Moo91}, Long--Paton \cite{LP93}, and Bigelow \cite{Big99}, using related criteria involving combinatorics of pairs of arcs in the $n$-punctured disk $D_n$. To each pair of arcs in $D_n$ we associate a polynomial called its \emph{Burau polynomial}. One criterion states that $\Psi_n$ is not faithful if and only if there exists a pair of intersecting arcs in an explicit set $\mathcal{A}$ whose Burau polynomial is zero. Ordering arc-pairs by their geometric intersection number, the first main theorem of this paper certifies the faithfulness of the Burau representation $\Psi_4$ up to arc-pairs with intersection number 2000.

\begin{maintheorem}\label{the:certify}Any pair of arcs $(\alpha, \beta) \in \mathcal{A}$ with geometric intersection number at most 2000 has non-zero Burau polynomial.
\end{maintheorem}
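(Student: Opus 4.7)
The plan is to reduce the statement to a finite computer search. An arc-pair $(\alpha,\beta) \in \mathcal{A}$ in $D_4$ is a combinatorial object: after fixing a reference pair $(\alpha_0,\beta_0) \in \mathcal{A}$, any other pair in $\mathcal{A}$ may be realised as $(\phi\cdot\alpha_0,\phi\cdot\beta_0)$ for some mapping class $\phi$ of $D_4$, and hence is encoded by a word $w$ in the Artin generators $\sigma_1^{\pm 1},\sigma_2^{\pm 1},\sigma_3^{\pm 1}$ of $B_4$. Under such an encoding, the geometric intersection number $i(\alpha,\beta)$ and the Burau polynomial $p_{(\alpha,\beta)}(t)\in\Z[t,t^{-1}]$ both become computable functions of $w$: the former is read from a combinatorial model of how the two arcs meet (e.g.\ counting crossings along a fixed spine), while the latter can be assembled either as a product of $4\times 4$ Burau matrices acting on a distinguished homology class, or as a sum over intersection points with each contributing a signed monomial determined by local winding data.

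With the encoding fixed, the procedure is a breadth-first enumeration by intersection number. Starting from the small-intersection base cases in $\mathcal{A}$, extend each encoding by one generator at a time, compute the intersection number and the Burau polynomial, and verify $p_w(t)\neq 0$. Discard any $w$ for which $i(w)>2000$, or which fails the admissibility conditions defining $\mathcal{A}$, or which lies in an equivalence class already explored. The theorem follows once every class with $i(w)\le 2000$ has been processed and none has produced the zero polynomial.

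The main obstacle is the combinatorial explosion: the number of braid words of length $\ell$ grows exponentially, and many distinct words yield the same arc-pair. An efficient equivalence reduction is essential, and is the engineering core of the proof. This would be obtained by (i) computing a canonical form for each arc-pair (for example a shortest-word representative, or a normal form for the coset of the stabiliser of $(\alpha_0,\beta_0)$ in $B_4$), hashing visited classes, and using all available symmetries of $\mathcal{A}$ that preserve $p_w(t)$ up to a unit $\pm t^k$; and (ii) updating $p_w(t)$ \emph{incrementally} under extension of $w$ by a single generator, so that each step costs only a constant amount of Laurent-polynomial arithmetic. With these two ingredients in place, the search up to $i(w)=2000$ becomes feasible, and Theorem \ref{the:certify} reduces to the exhaustive verification that no polynomial produced by the enumeration is identically zero.
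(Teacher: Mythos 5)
There is a genuine gap, and it starts with the encoding. You claim that, after fixing a reference pair $(\alpha_0,\beta_0)\in\mathcal{A}$, every other pair in $\mathcal{A}$ is realised as $(\phi\cdot\alpha_0,\phi\cdot\beta_0)$ for some $\phi\in B_4$. This is false: the simultaneous action of $B_4$ on both arcs preserves the geometric intersection number, and by Proposition~\ref{preserve} it also preserves the Burau polynomial, so the orbit of a single reference pair consists only of pairs with one fixed intersection number and one fixed polynomial (your enumeration would recompute the same Laurent polynomial forever and would never reach pairs of any other intersection number). The group action can only be used to normalise \emph{one} of the two arcs --- this is exactly Reduction~II in the paper, which fixes $\alpha$ to be the straight-line $(1,2)$ arc and leaves infinitely many candidate arcs $\beta$ of type $(3,4)$, cut down to finitely many per intersection number only after the further normalisation of initial and terminal segments (Reduction~III).

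Even if you repair the encoding to $(\alpha_0,\phi\cdot\beta_0)$, the search scheme still has a completeness and termination gap. Pruning every word $w$ with $i(w)>2000$ in a breadth-first enumeration is only legitimate if every pair with $i\le 2000$ is reachable by a word all of whose prefixes also satisfy the bound, or if you have an a priori computable bound on word length in terms of intersection number; intersection number is not monotone under appending a generator, and you establish no such bound, so you cannot conclude that ``every class with $i(w)\le 2000$ has been processed,'' nor that the enumeration terminates. The paper avoids this issue entirely by parametrising the arcs $\beta$ directly by weights on a universal train track $\mu$: the linear conditions D1--D4, N1--N6, I1--I3, T1--T4 are shown to be necessary and sufficient (Proposition~\ref{prop:sufficiency}), and Corollary~\ref{cor:interesectionCount} expresses the intersection number explicitly in terms of the weights, so for each even intersection number up to $2000$ one gets a finite, provably exhaustive list of weight tuples to test. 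To make your approach work you would need to supply the missing certified exhaustion per intersection number (e.g.\ a bound relating braid word length to intersection number); as written the argument does not prove the theorem.
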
 

We prove Theorem~\ref{the:certify} by computer search, identifying then testing a finite yet sufficient collection of arc-pair representatives for each intersection number. We describe the arcs using weighted train tracks, with linear inequalities between the weights being used to `steer' the arcs around the disk $D_n$.

Computation of Burau polynomials has been carried out previously. Working with a different collection of arcs in $D_n$ referred to as `noodles' and `forks', Bigelow \cite{Big02} certified the faithfulness of $\Psi_4$ up to noodles and forks intersecting 2000 times. Translating this into the language used here, this corresponds to a set of arc-pairs intersecting at most 1000 times. The bound obtained in Theorem~\ref{the:certify} is thus of notable increase. Also, the collection of arcs in \cite{Big02} and those in Theorem~\ref{the:certify} are of entirely different types. While either collection may be used to certify faithfulness, we are unaware of any way to compare the two.  Our approach benefited from decreased computation time, due to several short-cuts we were able to make. These are discussed in detail in Section 4.

One by-product of the proof of Theorem~\ref{the:certify} is the means to list a slew of Burau polynomials. In doing so, a striking periodicity is observed, as we now describe. We define the \emph{norm} of a Burau polynomial to be the sum of the absolute values of its coefficients. For reasons apparent in the calculation of these polynomials, if seeking the zero polynomial, we need only check arc-pairs with even intersection number. The following theorem records a repetitive structure observed in the minimum norm for such intersection numbers.

\begin{maintheorem}\label{the:period}Let $2k \in [44, 500]$. If $2k$ is divisible by $6$, then the minimum norm over all Burau polynomials arising from an arc-pair with $2k$ geometric intersections is $10$. Otherwise, the minimum norm is $8$.\end{maintheorem}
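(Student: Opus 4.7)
\emph{Plan.} Theorem~\ref{the:period} is a finite assertion about $229$ even intersection numbers, and since $500 \leq 2000$ the enumeration machinery used to prove Theorem~\ref{the:certify} already covers every relevant case. The plan is therefore to recycle that search: for each even $2k$ in the range, extract the complete sufficient list of train-track representatives of $\mathcal{A}$-arc-pairs of intersection number $2k$; compute the Burau polynomial of each representative; record the minimum norm (sum of absolute values of coefficients) over that finite list; and check that the minimum equals $10$ exactly when $6 \mid 2k$ and equals $8$ otherwise.

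The main conceptual adjustment over Theorem~\ref{the:certify} is making sure no norm-minimizer is silently discarded. The pruning shortcuts alluded to in Section~4 are tuned to certify non-vanishing, and so may legitimately throw away arc-pairs once they are proved to have nonzero Burau polynomial. For Theorem~\ref{the:period} those shortcuts must be replaced by a traversal that retains every polynomial and keeps a running minimum per intersection number. One must also double-check that the weighted train-track enumeration genuinely covers every isotopy class of intersection number $2k$ (a subtler completeness requirement than for Theorem~\ref{the:certify}, where it sufficed to rule out the zero polynomial), and that redundancies coming from symmetries of $D_4$ do not spuriously inflate the apparent minimum of any given $2k$.

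The main obstacle is purely computational: as $2k$ grows, the number of representatives grows quickly, and the search must remain tractable across all $229$ values of $2k$. Fortunately the authors already performed a search up to $2000$, so the engineering work for the sub-range $[44,500]$ is effectively a byproduct of the Theorem~\ref{the:certify} infrastructure. I would not expect a closed-form explanation of the $6\mid 2k$ dichotomy to emerge from this verification; Theorem~\ref{the:period} is presented as an empirical observation, and a structural proof -- together with any hope of extending it beyond $2k = 500$ -- would require identifying the arc-pair families that realize the minimum norm at each $2k$ and understanding how inserting six additional intersections affects the resulting polynomial in a norm-preserving way.
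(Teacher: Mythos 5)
Your proposal matches the paper's actual proof: Theorem~\ref{the:period} is established by exactly the kind of exhaustive computer enumeration you describe, using the train-track weight conditions and Reductions I--III for completeness, with the zero-polynomial short-circuit disabled so that a full list of Burau polynomials (and hence the minimum norm) is obtained for each even intersection number up to $500$. The paper likewise offers no structural explanation of the mod-$6$ dichotomy, only the observed minimum-norm families, so your assessment of what a closed-form proof would require is consistent with the authors' presentation.
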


The upper bound of 500 was imposed by limits on computation time; this periodicity almost certainly persists longer. Immediately, we observe that if $\Psi_4$ is not faithful, there must be some large intersection number at which the periodicity breaks.

We remark that the largest intersection numbers checked in Theorems \ref{the:certify} and \ref{the:period} are of different magnitudes. Both were pushed as high as possible, using hundreds of hours of computation time. However, the former only required testing whether the given polynomials equaled zero, whereas the latter demanded that we form a complete list of Burau polynomials to examine. For a given geometric intersection number, the latter case is more computationally laborious.

A strong relationship is observed between the arcs of minimum norm that appear in Theorem~\ref{the:period}. Every such arc falls into a family, where within a single family, the train track weights lie in an arithmetic progression and the ordered sets of coefficients of their Burau polynomials are all the same. Attempting to see the behavior of Burau polynomials across all arc-pairs, these `minimum norm families' obscure our view. By grouping all arcs into similar families, we are able to see through the minimum norms obtained for each intersection number and give numerical evidence that suggests $\Psi_4$ is faithful. This is discussed further in Section 5.

\textbf{Outline of paper.} In Section 2, we recall the definition of the Burau representation, and state criteria for its faithfulness. Section 3 explains how we use weighted train tracks to describe arcs in the disk $D_n$, and how we are able to reduce to only checking arc-pairs of a certain form. Section 4 details the algorithm we constructed, and in Section 5 we report on our observations and numerical evidence.

\textbf{Acknowledgements.} The authors would like to thank Stephen Bigelow, Dan Margalit, and Bal\'azs Strenner for helpful conversations. Part of this work was completed while the first author was in residence at the Mathematical Sciences Research Institute in Berkeley, California, during the Fall 2016 semester.
 \section{The Burau representation}
  Let $n\geq 2$. We begin this section by reviewing the definition of the Burau representation 
\[\Psi_n : B_n \to \mathrm{GL}_{n-1}(\mathbb{Z}[t^{\pm1}]).\] Then we discuss the criterion for faithfulness introduced by Moody and refined by Long--Paton and Bigelow.

\subsection{Burau via braids acting on cyclic covers}

One incarnation of the braid group $B_n$ is as the mapping class group of $D_n$ \cite{Bir74}, the disk with $n$ marked points in its interior, as depicted in Figure~\ref{disk} for $n=4$. Viewing the marked points as punctures, the fundamental group of $D_n$ is the free group $F_n$. With $p_0$ a basepoint on the boundary, a free basis is given by $X := \{ x_1, \dots , x_n \}$, where $x_i$ is the loop traveling around $p_i$ in the clockwise direction.

\begin{figure}[h]
\begin{center}
\begin{overpic}[width=2.7in]{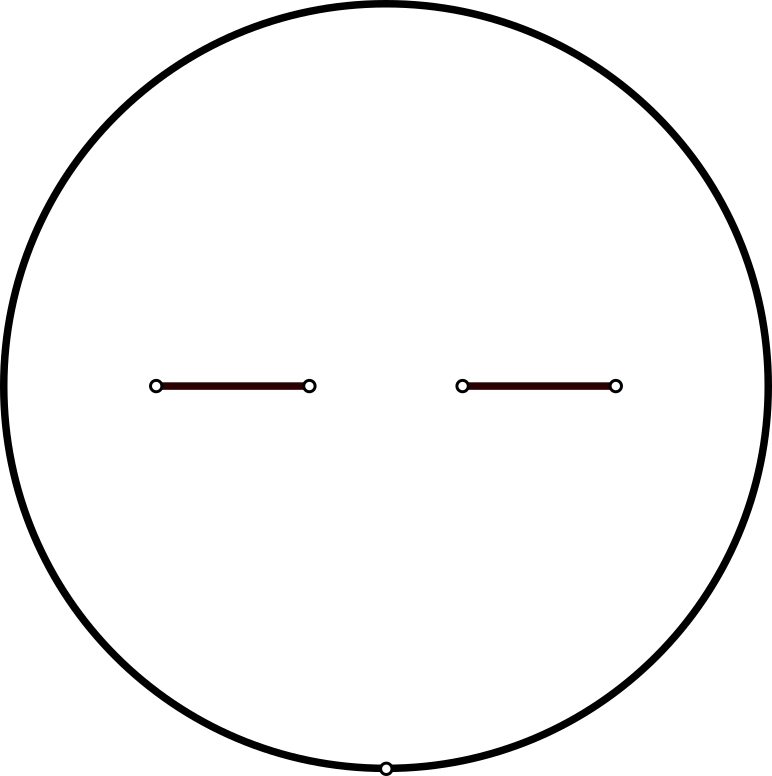}
\put(48,7){$p_0$}
\put(19,43){$p_1$}
\put(39,43){$p_2$}
\put(58,43){$p_3$}
\put(78,43){$p_4$}
\put(29,55){$\alpha$}
\put(68,55){$\beta$}
\end{overpic}
\caption{The $4$-punctured disk, $D_4$. Straight line arcs $\alpha$ and $\beta$ are depicted.}
\label{disk}
\end{center}
\end{figure}

Consider the cyclic cover $\tilde D_n$ of $D_n$ associated with the kernel of the exponent sum map $F_n = \langle X \rangle \to \Z$. This cover is formed by taking a copy or `level' of $D_n$ for each integer, cutting along a straight-line arc from the boundary to each puncture in every level, and then gluing appropriately. The cover $\tilde D_n$ has the structure of a free $\mathbb{Z}[t^{\pm1}]$-module endowed upon its first (integral) homology group, with $t$ corresponding to a choice of generator of the cover's deck group, $\Z$. The action of $B_n$ on $D_n$ preserves the kernel of the exponent sum map, and thus lifts to $\tilde D_n$. The free $\mathbb{Z}[t^{\pm1}]$-module $H_1(\tilde D_n)$ has rank $n-1$, and the \emph{(reduced) Burau representation} $\Psi_n : B_n \to \mathrm{GL}_{n-1}(\mathbb{Z}[t^{\pm1}])$ assigns to each braid the $\Z[t^{\pm 1}]$-linear transformation of $H_1(\tilde D_n) \cong \mathbb{Z}[t^{\pm1}] ^ {(n-1)}$ that it induces.

\subsection{A criterion for faithfulness}

An \emph{arc of type $(i,j)$} in $D_n$ is the (oriented) image of a continuous embedding of the unit interval $[0,1]$ into $D_n$, whose endpoints are $p_i$ and $p_j$. Given arcs $\alpha$ and $\beta$ of some (perhaps distinct) type, the \emph{Burau polynomial} \cite{LP93} is defined to be
\[ \int_{\beta} \alpha := \sum_{j \in \mathbb{Z}} \hat \iota (t^j \cdot \tilde \alpha , \tilde \beta) t^j ,\] where $\tilde \alpha$ and $\tilde \beta$ are some choice of lifts of $\alpha$ and $\beta$ respectively, and $\hat \iota$ denotes algebraic intersection in $\tilde D_n$. Note that this polynomial is only well-defined up to multiplication by $t$, and that $\int_{\beta} \alpha$ and $\int_{\alpha} \beta$ are obtained from each other by replacing $t$ by $t^{-1}$ and negating coefficients. For a comprehensive discussion of calculating Burau polynomials in practice, we refer to Bigelow \cite{Big99}.

The following criteria, written as appears in \cite{Big99}, allow us to use Burau polynomials to investigate the faithfulness of the Burau representation.

\begin{theorem}[Bigelow \cite{Big99}] \label{the:criterion}Let $n \geq 3$. The following are equivalent:
\begin{enumerate}\item The Burau representation $\Psi_n$ is unfaithful.
\item There exist essentially intersecting arcs $\alpha$ and $\beta$ in $D_n$ of types $(1,2)$ and $(3,4)$ respectively, with zero Burau polynomial.
\item There exist essentially intersecting arcs $\alpha$ and $\beta$ in $D_n$ of types $(1,2)$ and $(0,3)$ respectively, with zero Burau polynomial. 
\end{enumerate}
\end{theorem}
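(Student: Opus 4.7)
The theorem is Bigelow's \cite{Big99}, so my plan is to sketch how one would prove it. The equivalence $(2) \Leftrightarrow (3)$ should come from a change-of-coordinates argument exchanging the endpoint $p_0$ on the boundary with a puncture (or vice versa), tracking how this transformation acts on lifts to $\tilde{D}_n$; I would therefore concentrate on $(1) \Leftrightarrow (2)$.

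For $(1) \Rightarrow (2)$: I would fix the straight-line arcs $\alpha_0$ of type $(1,2)$ and $\beta$ of type $(3,4)$, which are disjoint in $D_n$, so that $\int_\beta \alpha_0 = 0$. Given a nontrivial $\phi \in \ker \Psi_n$, set $\alpha := \phi(\alpha_0)$. The key point is that the Burau polynomial is an algebraic intersection pairing of lifts in the cover $\tilde{D}_n$, and therefore depends only on the $\mathbb{Z}[t^{\pm 1}]$-homology classes involved. Since $\phi$ acts trivially on $H_1(\tilde{D}_n)$ and the lift $\tilde\beta$ represents a well-defined class in an appropriate relative or locally-finite sense, I would argue that $\int_\beta \alpha = \int_\beta \alpha_0 = 0$. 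Non-triviality of $\phi$ in $B_n$ ensures that $\alpha$ is not isotopic to $\alpha_0$; composing with a further braid if necessary produces an arc essentially intersecting $\beta$ without disturbing membership in the kernel.

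For $(2) \Rightarrow (1)$: Given arcs $\alpha$ and $\beta$ satisfying the conditions, I would use the change-of-coordinates principle to produce some $\phi \in B_n$ with $\phi(\alpha_0) = \alpha$, and then argue that $\phi \in \ker \Psi_n$. I would pick a $\mathbb{Z}[t^{\pm 1}]$-basis of $H_1(\tilde{D}_n)$ represented by lifts of straight arcs between consecutive punctures, so that the action of $\phi$ on each basis element is detected by algebraic intersection pairings against a dual family of arcs of the same type as $\beta$. The hypothesis $\int_\beta \alpha = 0$, combined with essential intersection and the naturality of the pairing, would force each of these pairings to vanish, giving $\phi_* = \mathrm{id}$ on $H_1(\tilde{D}_n)$ and hence $\phi \in \ker \Psi_n$; nontriviality follows from $\alpha \not\simeq \alpha_0$. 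The main obstacle is this direction: deducing the full triviality of the Burau action from the vanishing of a single Burau polynomial requires a careful choice of homology basis and a precise accounting of how pairings with arcs of types $(1,2)$ and $(3,4)$ encode the braid action on the cover.
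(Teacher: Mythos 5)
The central problem is your $(2) \Rightarrow (1)$ direction: the braid $\phi$ you construct with $\phi(\alpha_0) = \alpha$ has no reason whatsoever to lie in $\ker \Psi_n$, and the single hypothesis $\int_\beta \alpha = 0$ cannot force $\Psi_n(\phi) = \mathrm{id}$. Vanishing of one pairing against one arc $\beta$ gives one linear condition; it says nothing about how $\phi$ moves the other basis classes of $H_1(\tilde{D}_n)$, so no choice of ``dual family'' will upgrade it to triviality of the whole matrix --- generically such a $\phi$ acts very nontrivially on the cover. The actual mechanism (Bigelow's, and the one this paper itself records in Section 3: ``any Burau pair $(\alpha',\beta')$ yields an explicit, non-trivial element in the kernel of $\Psi_n$, \dots the commutator $[T_{\alpha'},T_{\beta'}]$'') is different: $\Psi_n(T_\alpha)$ and $\Psi_n(T_\beta)$ are transvection-type matrices determined by the homology classes of the lifted arcs, so $\int_\beta \alpha = 0$ forces these two \emph{matrices} to commute, while the essential intersection of $\alpha$ and $\beta$ forces the half-twists themselves not to commute in $B_n$; hence $[T_\alpha, T_\beta]$ is a nontrivial kernel element. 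Your proposal is missing this idea entirely, and the route you substitute for it would fail.

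Your $(1) \Rightarrow (2)$ sketch starts correctly (set $\alpha = \phi(\alpha_0)$ and use that the pairing depends only on the homological data, so $\int_\beta \phi(\alpha_0) = \int_\beta \alpha_0 = 0$ for any $\beta$ of type $(3,4)$ disjoint from $\alpha_0$), but two gaps remain. First, a nontrivial kernel element need not move $\alpha_0$; one must argue, e.g., that an element fixing every such arc up to isotopy is central and that the center injects under $\Psi_n$. Second, and more seriously, you must produce a $\beta$ disjoint from $\alpha_0$ yet essentially intersecting $\phi(\alpha_0)$: ``composing with a further braid'' destroys membership in the kernel (and conjugating moves both arcs, preserving disjointness), so this patch does not work, and this is precisely where the real content of Bigelow's and Long--Paton's argument lies. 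Note also that the paper offers no proof of this theorem --- it cites \cite{Big99} and explicitly remarks that $(2) \Rightarrow (3)$ is not shown there; your suggestion of ``exchanging $p_0$ with a puncture'' is not induced by any homeomorphism of $D_n$, since $p_0$ lies on the boundary and the punctures do not, so that equivalence also requires a genuine argument rather than a change of coordinates.
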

The proof of Theorem~\ref{the:criterion} is given in Bigelow. While it does not explicitly show that the second statement implies the third, this may be established using techniques similar to those found elsewhere in the proof.

Variants of these criteria for faithfulness have been used to show that $\Psi_n$ is not faithful for large $n$. Moody \cite{Moo91} initially obtained a lack of faithfulness for $n \geq 9$, which was improved to $n \geq 6$ by Long--Paton \cite{LP93}. Most recently, Bigelow \cite{Big99} showed that $\Psi_5$ was not faithful. The remaining case to investigate is $n = 4$; our goal for the remainder of the paper is to do so.

To close this section, we give a proof that the Burau polynomial of an arc-pair is invariant under the action of the braid group. We will use this fact repeatedly to simplify the calculations we carry out.

\begin{proposition}Let $\phi \in B_n$ for $n \geq 2$. Then for any arc $\alpha$ of type $(i,j)$ and $\beta$ of type $(k, l)$ in $D_n$, we have \label{preserve}
\[ \int_{\beta} \alpha = \int_{\phi \cdot \beta} \phi \cdot \alpha . \]
\end{proposition}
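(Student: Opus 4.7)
The plan is to lift the braid $\phi$ to a homeomorphism $\tilde\phi$ of the cyclic cover $\tilde D_n$, and then use invariance of algebraic intersection under homeomorphisms together with the fact that $\tilde\phi$ commutes with the deck group action.

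First, I would recall from the construction of $\tilde D_n$ that any mapping class $\phi \in B_n$ preserves the kernel of the exponent sum map $F_n \to \Z$, because $\phi$ sends each free generator $x_i$ to a conjugate of some $x_{\sigma(i)}$, which has the same exponent sum. Consequently $\phi$ admits a lift $\tilde\phi : \tilde D_n \to \tilde D_n$, and among the $\Z$-family of such lifts, one may choose $\tilde\phi$ so that it commutes with a fixed generator $t$ of the deck group. (Invariance of the exponent sum ensures that $\tilde\phi$ normalizes the deck group, and after possibly composing with a deck transformation, this normalizer action is trivial, since it must send $t$ to a generator and the generator is determined by orientation.)

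Next, I would fix lifts $\tilde\alpha$ and $\tilde\beta$ of $\alpha$ and $\beta$, and observe that $\tilde\phi \cdot \tilde\alpha$ and $\tilde\phi \cdot \tilde\beta$ are valid lifts of $\phi\cdot\alpha$ and $\phi\cdot\beta$. Since $\phi$ (and hence $\tilde\phi$) is orientation-preserving, algebraic intersection numbers in $\tilde D_n$ are invariant under $\tilde\phi$:
\[ \hat\iota(\tilde\phi \cdot u, \tilde\phi \cdot v) = \hat\iota(u, v) \]
for any $1$-chains $u,v$. Combining this with $\tilde\phi \circ t = t \circ \tilde\phi$, I get
\[ \hat\iota\bigl(t^j \cdot (\tilde\phi \cdot \tilde\alpha), \tilde\phi \cdot \tilde\beta\bigr) = \hat\iota\bigl(\tilde\phi \cdot (t^j \cdot \tilde\alpha), \tilde\phi \cdot \tilde\beta\bigr) = \hat\iota(t^j \cdot \tilde\alpha, \tilde\beta) \]
for every $j \in \Z$. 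Summing over $j$ with weights $t^j$ yields the desired equality of Burau polynomials.

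The main obstacle is the initial step: verifying that a lift $\tilde\phi$ can be chosen to commute with $t$ on the nose rather than merely conjugating $t$ to $t^{\pm 1}$. Once that is in hand, the rest is a direct manipulation using the definition of the Burau polynomial and the well-known fact that orientation-preserving homeomorphisms preserve algebraic intersection numbers.
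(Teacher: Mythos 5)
Your argument is correct, but it takes a genuinely different route from the paper's. The paper reduces to the case where $\phi$ is a standard half-twist generator of $B_n$, isotopes the arcs so that all of their intersection points lie outside the support of the half-twist, and then checks in a local picture that the extra excursion of the arc segments around the two interchanged punctures (which in $\tilde D_n$ amounts to the lifts climbing one level and descending again) contributes nothing new to the Burau polynomial. You instead argue globally: $\phi$ preserves the exponent-sum homomorphism, hence lifts to $\tilde\phi : \tilde D_n \to \tilde D_n$; any such lift commutes with the deck generator $t$; and since $\tilde\phi$ is orientation-preserving it preserves algebraic intersection numbers, while $\tilde\phi \cdot \tilde\alpha$ and $\tilde\phi \cdot \tilde\beta$ are lifts of $\phi\cdot\alpha$ and $\phi\cdot\beta$, so the defining sum is preserved term by term. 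Your approach avoids both the reduction to generators and the isotopy bookkeeping, works verbatim for any mapping class preserving the exponent sum, and gives equality on the nose for the chosen lifts rather than only up to the usual power-of-$t$ ambiguity; the paper's approach is more hands-on and exhibits the mechanism concretely in the cover, which matches the explicit arc computations done later. One tightening of the step you flag as the main obstacle: it is cleaner than your parenthetical suggests. Composing a lift with a deck transformation does not change how it conjugates the (abelian) deck group, so no choice of lift is needed; the conjugation action of any lift on the deck group is the automorphism of $F_n/\ker(\epsilon) \cong \Z$ induced by $\phi_*$, and this is the identity precisely because braid automorphisms send each $x_i$ to a conjugate of some $x_j$ and hence preserve the exponent sum --- not because a generator of $\Z$ is ``determined by orientation.'' With that justification substituted, the proof is complete.
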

\begin{proof}
We check that the proposition holds when $\phi$ is one the standard half-twists generating the braid group $B_n$ (i.e. the half-twist $\sigma$ interchanging adjacent punctures seen in Figure \ref{halftwist}).

Given any arcs $\alpha$ and $\beta$, we may assume that any points of intersection between them lie outside of the support of $\sigma$, by either isotoping the arcs or the half-twist. Locally, $\alpha$ and $\beta$ can be taken to look as in Figure \ref{pre}, with, perhaps, the segment of the red curve beginning and ending in a different position relative to the punctures.

\begin{figure}
\centering
\begin{subfigure}[t]{0.4\textwidth}\centering
\includegraphics[width=1.6in]{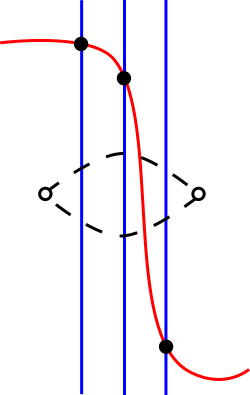}  
\caption{The half-twist $\sigma$ is supported on a regular neighborhood of the two dashed arcs.}
\label{pre}
\end{subfigure}
\quad
\begin{subfigure}[t]{0.4\textwidth}\centering
\includegraphics[width=2in]{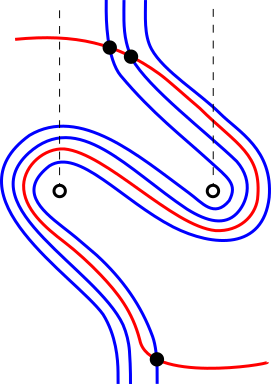} 
\caption{The dashed vertical lines indicate when the arcs' lifts change levels in the cover $\tilde D_n$.}
\label{post}
\end{subfigure}
\caption{The effect of $\sigma$ on the arc-pair $(\alpha, \beta)$. Subfigure (a) depicts a generic local picture of intersections between the pair. Subfigure (b) shows the result of applying $\sigma$.} \label{halftwist}
\end{figure}

When the half-twist $\sigma$ is carried out, the arc segments weave first around one puncture, then the other. Passing to the cover $\tilde D_n$, this corresponds to the arcs' lifts first climbing one level, traveling to the other puncture, then descending to the original level. Since no intersections between $\alpha$ and $\beta$ occur during this detour, it has no effect upon our calculation of the Burau polynomial of the pair. Thus, the half-twist $\sigma$ leaves the Burau polynomial unchanged.
\end{proof}

 \section{Chasing arcs on train tracks}
  Our ultimate aim is to use Theorem~\ref{the:criterion} to certify the faithfulness of $\Psi_4$ up to some large number of intersections between the pairs of arcs appearing in the theorem. Given that the existence of a pair $(\alpha, \beta)$ with $\beta$ of type $(3,4)$ and zero Burau polynomial is equivalent to the existence of some pair $(\alpha', \beta')$ with $\beta '$ of type $(0,3)$ and zero Burau polynomial, we limit our search to those of the former type. Thus we will show that, up to some large bound $N$, any arcs $\alpha$ of type $(1,2)$ and $\beta$ of type $(3,4)$ intersecting $N$ times or fewer have non-zero Burau polynomial.
 
In practice, the techniques we use could be implemented to approach pairs of the latter type. Bigelow carried out a computer search on this latter type using different methods \cite{Big02}. There, his `noodles' and `forks' are equivalent to arc-pairs of types $(1,2)$ and $(0,3)$: the fork's tine is an arc of type $(1,2)$, while a noodle $N$ gives rise to an arc of type $(0,i)$, by joining $p_0$ to the puncture $p_i$ separated from the rest by $N$. We note that passing from a noodle to its arc halves the intersection number with the fork's tine, resulting in the halving from 2000 to 1000 stated in this paper's introduction.

Since our focus will be on arc pairs $(\alpha, \beta)$ of types $(1,2)$ and $(3,4)$, we will call any such pair a \emph{Burau pair}.
 
\subsection{Train tracks}

Given an arc in $D_n$, we encode its path using a finite data set, via a train track. For our purposes, a \emph{train track} $\tau$ in the punctured disk $D_n$ is an embedded trivalent graph $\Gamma$, where all edges incident at a vertex $v$ have a well-defined, common tangent line at $v$. It is also required that each $v$ has exactly two edges entering from one direction. We refer to the edges of $\Gamma$ as \emph{rails} and the vertices as \emph{switches}. Figure~\ref{track} displays an example of a train track in $D_4$. We note that our definition is weaker than the usual definition of train track, in that the complement of a train track may have a disk as a connected component.

Let $\tau$ be some train track in $D_n$. A simple closed curve $\gamma$ in $D_n$ is \emph{carried by $\tau$} if $\gamma$ is homotopic to a regular smooth path in $\tau$. The number of points in $\gamma$ that are homotoped to the same point of a rail $r$ is well-defined over all such homotopies, called the \emph{weight} of $\gamma$ on $r$. More loosely, the weight of $\gamma$ on $r$ is the number of segments near $r$ after $\gamma$ has been homotoped close to $\tau$.

Given a switch $v$, let $w_1$, $w_2$ and $w_3$ denote the weights of a curve $\gamma$, carried by $\tau$, on the rails $r_1$, $r_2$ and $r_3$ of $v$, respectively. If $r_1$ and $r_2$ approach $v$ from the same direction, the \emph{switch condition}
\[w_1+w_2 = w_3 \]
must hold between the weights. Any set of weights on the rails of $\Gamma$ that satisfies every switch condition gives rise to some simple, closed multi-curve (that is, some collection of pairwise disjoint simple closed curves).

We will be concerned with describing arcs rather than simple closed curves. To pass from the former to the latter, we take the boundary of a regular neighborhood of the arc. We discuss how to reverse this procedure in Section 4, giving necessary and sufficient conditions that a given multi-curve contains a curve that arises from an arc in this fashion. In practice, we will discard any proper multi-curve (i.e. a multi-curve with at least two components). The procedure we use is discussed at the end of Section 4.

\subsection{A universal track}

Due to our weakened definition of a train track in $D_n$, we may carry all simple closed curves in $D_4$ on a single track. This track is seen in Figure~\ref{track}, and we denote it by $\mu$.
\begin{figure}[h]
\begin{center}
\begin{overpic}[width=2.7in]{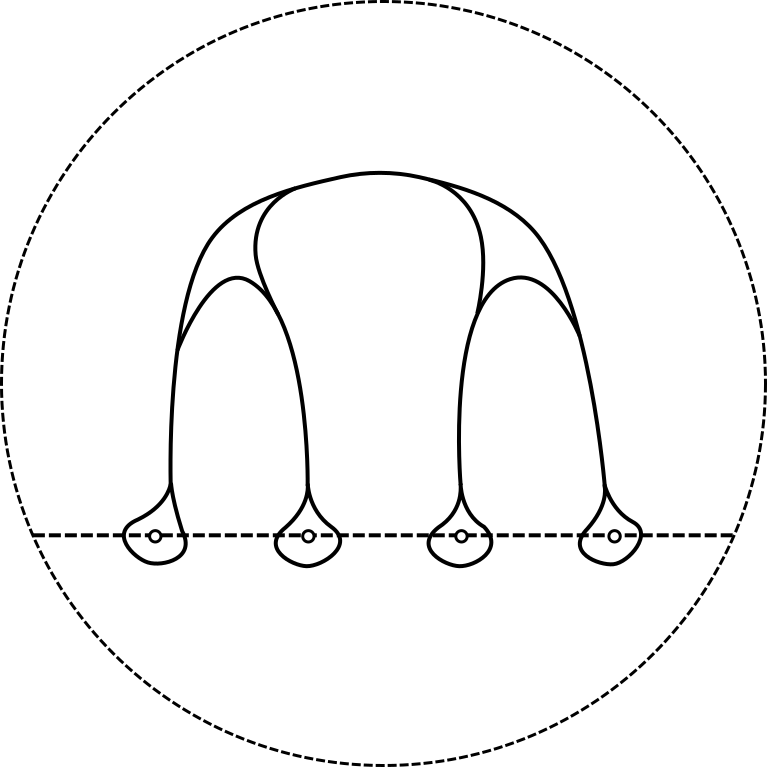}
\end{overpic}
\caption{The universal train track $\mu$, together with an ideal decomposition of $D_4$ into two hexagons.}
\label{track}
\end{center}
\end{figure}
\begin{proposition}\label{pro:universal}Any simple closed curve $\gamma$ in $D_4$ is carried by the train track $\mu$.
\end{proposition}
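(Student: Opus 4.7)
The plan is to exploit the ideal hexagonal decomposition of $D_4$ displayed alongside $\mu$ in Figure~\ref{track}. The first step is to verify, by inspection of the figure, that $\mu$ is positioned as a ``dual spine'' to the decomposition: each rail of $\mu$ crosses at most one ideal edge transversely in a single point, and the complementary regions of $\mu$ in $D_4$ split into once-punctured disks (one around each $p_i$) together with unpunctured disks contained in the interiors of the hexagons. The unpunctured disks are permitted by the weakened notion of train track adopted in this paper, and cause no difficulty because no essential arc needs to enter them.

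With this setup in hand, I would take an arbitrary simple closed curve $\gamma \subset D_4$ and isotope it into minimal position with respect to the ideal edges. Null-homotopic curves and curves peripheral to a single puncture are carried trivially (by the zero weight assignment, or via the small loop of $\mu$ encircling that puncture), so I may assume $\gamma$ is essential and non-peripheral. Minimality then forces the intersection $\gamma \cap H$ in each hexagon $H$ to be a disjoint union of simple arcs, each connecting two distinct boundary edges of $H$ (no arc has both endpoints on the same edge, and no closed component lives entirely inside $H$). The local claim is that every such arc is homotopic rel endpoints to a regular smooth path in $\mu \cap H$; this reduces to a finite enumeration of arc-types in a hexagon having at most one puncture-corner.

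Assembling these local homotopies produces a global homotopy of $\gamma$ onto $\mu$, where the weight assigned to each rail $r$ equals the number of intersections of $\gamma$ with the ideal edge that $r$ crosses. The switch conditions hold automatically from this interpretation: at each switch, the arcs entering from one side of the corresponding ideal edge match the arcs exiting to the other. The step I expect to require the most care is the local combing inside a hexagon with a puncture-corner, since arcs of $\gamma$ that wind around that puncture must be absorbed by the loop of $\mu$ encircling it — an absorption which depends crucially on the weakened train track convention that allows a disk component to appear in the complement of $\mu$.
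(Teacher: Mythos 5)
Your proposal is correct and follows essentially the same route as the paper: decompose $D_4$ into the two ideal hexagons of Figure~\ref{track}, put $\gamma$ in minimal (normal) position so that it meets each hexagon in finitely many arc-types joining distinct 1-cells, and verify by inspection that each such arc-type can be homotoped onto the rails of $\mu$. Your extra remarks on the dual-spine picture, peripheral curves, and the automatic switch conditions are fine elaborations but do not change the argument.
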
 
\begin{proof}
We decompose $D_4$ as the (ideal) polygonal complex seen in Figure~\ref{track}. Up to isotopy, any simple closed curve $\gamma$ may be described via a sequence of line segments, each of which enters then leaves some polygon in this decomposition of $D_4$. We may assume that no segment enters and leaves a polygon on the same 1-cell, and that no segment intersects a 1-cell lying on the boundary of $D_4$. Each segment, up to isotopy, is uniquely determined by which two 1-cells it meets.

To show that any $\gamma$ is carried by the track $\mu$, it thus suffices to show that the finitely many possible line segments in the decomposition's polygons all may be homotoped so that they lie on the rails of $\tau$. Figure~\ref{track} may be used to verify this.
\end{proof}

\subsection{Standard forms for arcs}
\label{subsect:standardForms}
A priori, to use Theorem~\ref{the:criterion} to investigate the faithfulness of $\Psi_4$, there are infinitely many pairs of arcs with a fixed intersection number for which to calculate the Burau polynomial. In this section, we describe how to reduce to checking only finitely many pairs for each intersection number.

Our first reduction follows from the definition of the Burau polynomial, since each intersection between $\alpha$ and $\beta$ contributes $\pm t^i$ to their polynomial.

\textbf{Reduction I:} We need only calculate the Burau polynomial of arc pairs $(\alpha, \beta)$ whose (geometric) intersection number is even and positive.

Our subsequent reductions utilize the action of the braid group $B_4$ on $D_4$. As discussed by Bigelow, any Burau pair $(\alpha ', \beta ')$ yields an explicit, non-trivial element in the kernel of $\Psi_n$. Letting $T_{\gamma}$ denote the half-twist about an arc $\gamma$ in $D_4$, this explicit element is the commutator $[T_{\alpha '}, T_{\beta '}]$. Now, observe that any arc of type $(1,2)$ may be carried to the straight line arc $\alpha$ seen in Figure~\ref{disk} by acting by an appropriate member $\sigma$ of the braid group $B_4$. It follows from Proposition~\ref{preserve} that $(\alpha ', \beta ')$ is a Burau pair if and only if $(\alpha, \sigma \cdot \beta ')$ is a Burau pair. This leads to our second reduction.

\emph{Our subsequent reductions utilize the action of the braid group $B_4$ on $D_4$. Observe that any arc of type $(1,2)$ may be carried to the straight line arc $\alpha$ seen in Figure~\ref{disk} by acting by an appropriate member $\sigma$ of the braid group $B_4$. It follows from Proposition~\ref{preserve} that $(\alpha ', \beta ')$ has the same Burau polynomial as $(\alpha, \sigma \cdot \beta ')$. This leads to our second reduction.}

\textbf{Reduction II:} We need only calculate the Burau polynomial of arc pairs $(\alpha, \beta)$ where $\alpha$ is the straight line arc joining $p_1$ and $p_2$.

To perform our final reduction, let $N$ denote a regular neighborhood of the arc $\alpha$, and let $B_4(\partial N)$ denote the stabilizer in $B_4$ of the isotopy class of the boundary $\partial N$ of $N$. By letting the marked points $p_3$ and $p_4$ wander in the twice-punctured annulus $D_4 \setminus N$, we see that any arc $\beta '$ of type $(3,4)$ has a (perhaps different) representative whose initial and terminal segments are one of the possibilities seen in Figures \ref{initial} and \ref{terminal}, respectively. This is because such a wandering may be achieved by acting by a braid in $B_4(\partial N)$. 

\begin{figure}
\centering
\begin{subfigure}[t]{0.4\textwidth}\centering
\includegraphics[width=2in]{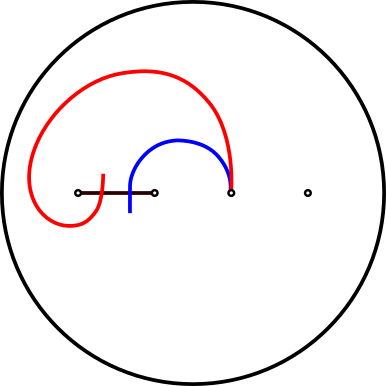}  
\caption{}
\label{initial}
\end{subfigure}
\quad
\begin{subfigure}[t]{0.4\textwidth}\centering
\includegraphics[width=2in]{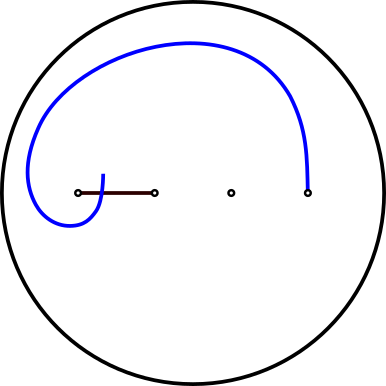} 
\caption{}
\label{terminal}
\end{subfigure}
\caption{Initial (a) and terminal (b) segments of the arc $\beta$ after our reduction process. We orient $\beta$ so that it begins at $p_3$ and ends at $p_4$.} \label{segments}
\end{figure}

Suppose two arcs $\beta_1$ and $\beta_2$ have as initial and terminal segments one of the choices shown in Figure~\ref{segments}. If $\beta_1$ and $\beta_2$ lie in the same orbit under the action of $B_4(\partial N)$, then they are isotopic rel. endpoints. We see this as follows. Certainly, if $\beta_1$ and $\beta_2$ belong to the same $B_4(\partial N)$-orbit, they must have the same choice of initial and terminal segment, since $B_4(\partial N)$ acts on the annulus $D_4 \setminus N$, preserving whether the segments both intersect $\alpha$ on the same or opposing sides. Assume $\beta_1$ and $\beta_2$ agree on their initial and terminal segments and belong to the same $B_4(\partial N)$-orbit. Then any braid $\sigma \in B_4(\partial N)$ mapping $\beta_1$ to $\beta_2$ must stabilize the isotopy class of a regular neighborhood of the union of $\alpha$ together with the initial and terminal segments of $\beta_1$. Any such $\sigma$ must be a power of the Dehn twist about a curve isotopic to the boundary of $D_4$, and hence we have $\beta_1 = \beta_2$. We are thus led to make the following reduction.

\textbf{Reduction III:} We need only calculate the Burau polynomial of arc pairs $(\alpha, \beta)$ where $\beta$ has initial and terminal segments as seen in Figure \ref{segments}.

To conclude this section, we summarize our reduction process. The above discussion allows us to only consider arc pairs $(\alpha, \beta)$ that essentially intersect a positive, even number of times, such that $\alpha$ is the straight-line arc from $p_1$ to $p_2$ and $\beta$ has one of the initial and terminal segment choices displayed in Figure~\ref{segments}. No such arcs forming a pair with $\alpha$ may be carried to one another via an element of $B_4$ without changing the isotopy class of the fixed arc $\alpha$.

Note that, given our reductions, there are finitely many arcs $\beta$ that intersect $\alpha$ some fixed number of times. We remark that such an easily described set of reductions is enjoyed only by the four-punctured disk $D_4$: for larger $n$, the presence of punctures without arcs attached make the situation inside $D_n$ more challenging to report.

 \section{Implementation}
  \newcommand{\indentedlist}{\setlength\itemindent{0.5cm}}
\newcommand{\specialcell}[2][c]{\begin{tabular}[#1]{@{}c@{}}#2\end{tabular}}

Let $\beta$ be an arc belonging to a Burau pair $(\alpha, \beta)$ subject to the reductions of Section \ref{subsect:standardForms}. The boundary of a regular neighborhood of $\beta$ is a loop $\gamma$ supported on $\mu$, and thus we can get a set of weights for $\gamma$. In this section we go the opposite direction, finding necessary and sufficient conditions that a set of weights produces a multicurve $\gamma$, one component of which is a regular neighborhood of an arc $\beta$ belonging to a Burau pair $(\alpha, \beta)$ subject to the reductions of Section \ref{subsect:standardForms}. Not being able to give conditions on the weights to prevent proper multicurves from arising, we instead detect proper multicurves by performing a preliminary computation on a given weight set. Details on this as well as our algorithm are given at the end of this section.

\begin{figure}[h]
 \begin{center}
  \begin{overpic}[width=0.8\textwidth]{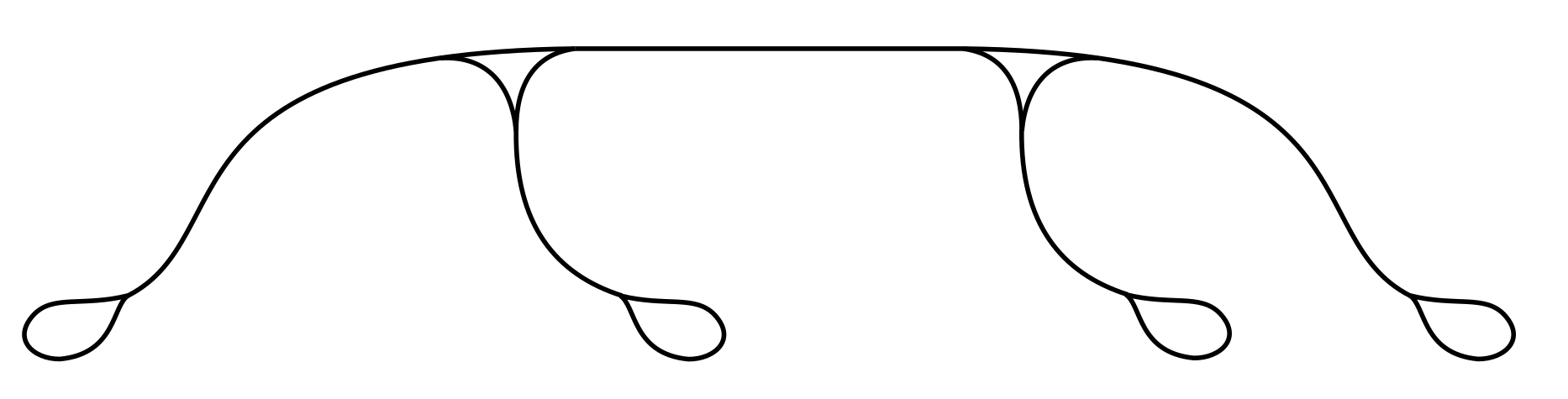}
   \put(0,1){$r_0$}
   \put(45,1){$r_1$}
   \put(78,1){$r_2$}
   \put(96,1){$r_3$}
   \put(8,13){$r_4$}
   \put(29,13){$r_5$}
   \put(61,13){$r_6$}
   \put(80,13){$r_7$}
   \put(27,19){$r_8$}
   \put(35,19){$r_9$}
   \put(30,24){$r_{10}$}
   \put(59,19){$r_{11}$}
   \put(67,19){$r_{12}$}
   \put(63,24){$r_{13}$}
   \put(45,24){$r_{14}$}
  \end{overpic}
  \caption{The train track $\mu$.}
  \label{fig:implementationTrainTrack}
 \end{center}
\end{figure}

With $\N$ the nonnegative integers, let $(w_i)_{i=0}^{14} \in \N^{15}$ where $w_i$ is the weight of the $i$th rail of $\mu$ as labeled in Figure \ref{fig:implementationTrainTrack}. The switch conditions are expressed by the following equations.
\begin{align*}
 w_4 &= 2w_0 & w_8 &= w_0 + w_1 - \frac{w_{14}}{2} & w_{11} &= w_2 - w_3 + \frac{w_{14}}{2}\\
 w_5 &= 2w_1 & w_9 &= -w_0 + w_1 + \frac{w_{14}}{2} & w_{12} &= w_2 + w_3 - \frac{w_{14}}{2}\\
 w_6 &= 2w_2 & w_{10} &= w_0 - w_1 + \frac{w_{14}}{2} & w_{13} &= -w_2 + w_3 + \frac{w_{14}}{2}\\
 w_7 &= 2w_3
\end{align*}
As every weight is freely determined by $w_0$, $w_1$, $w_2$, $w_3$, and $w_{14}$, 
we focus our attention on the tuple $(w_0, w_1, w_2, w_3, w_{14})$.

\textbf{Divisibility:} The arc $\beta$ starts on rail $6$ and ends on rail $7$, so weights $w_0$ and $w_1$ are even and weights $w_2$ and $w_3$ are odd. 
\begin{center}
 \begin{tabular}{clcclcclccl}
  D1. & $2 \mid w_0$ & \qquad \qquad & D2. & $2 \nmid w_1$ & \qquad \qquad & D3. & $2 \mid w_2$ && D4. & $2 \nmid w_3$
 \end{tabular}
\end{center}

\textbf{Nonnegativity:} The weights $w_8,\ w_9,\ w_{10},\ w_{11},\ w_{12},$ and $w_{13}$ must be nonnegative.
\begin{center}
 \begin{tabular}{clcclccl}
  N1. & $\frac{w_{14}}{2} \leq w_0 + w_1$ & \qquad \qquad & N3. & $w_1 \leq w_0 + \frac{w_{14}}{2}$ & \qquad \qquad & N5. & $\frac{w_{14}}{2} \leq w_2 + w_3$\\
  N2. & $w_0 \leq w_1 + \frac{w_{14}}{2}$ && N4. & $w_3 \leq w_2 + \frac{w_{14}}{2}$ && N6. & $w_2 \leq w_3 + \frac{w_{14}}{2}$
 \end{tabular}
\end{center}

In the next two groups of conditions, we ``steer'' the initial and terminal segments as dictated by the reductions in Section \ref{subsect:standardForms}. Consider the track segment seen in Figure \ref{fig:TrackSegment}. Which rail the arc follows is determined by the number of lines $\ell$ that are to the left of the arc as it approaches a switch. Beginning at the switch of $r_0$ and $r_1$ traveling left, $\ell = w_0$. The arc is steered left if and only if $w_0 < w_2$. Assuming this is the case, after passing through the switch of $r_2$ and $r_4$ we still have $\ell = w_0$. On the other hand if the arc is steered right, then after passing through the switch of $r_3$ and $r_4$ we now have $\ell = w_0 - w_2 + w_4$. As the notions ``initial'' and ``terminal'' are merely a choice of orientation, these remarks equally apply to the terminal segment of the arc.

\begin{figure}
 \centering
 \begin{subfigure}[t]{0.4\textwidth}
  \centering
  \begin{overpic}[scale=0.45]{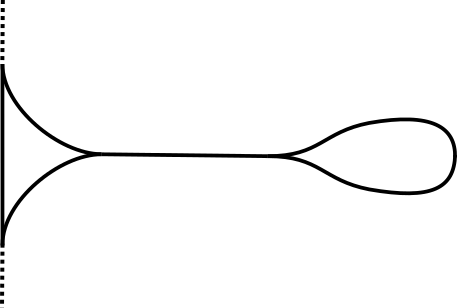}
   \put(80,17){$r_0$}
   \put(42,26){$r_1$}
   \put(7,20){$r_2$}
   \put(7,42){$r_3$}
   \put(-8,32){$r_4$}
  \end{overpic}
  \caption{}
 \end{subfigure}
 \quad
 \begin{subfigure}[t]{0.4\textwidth}
  \centering
  \begin{overpic}[scale=0.45]{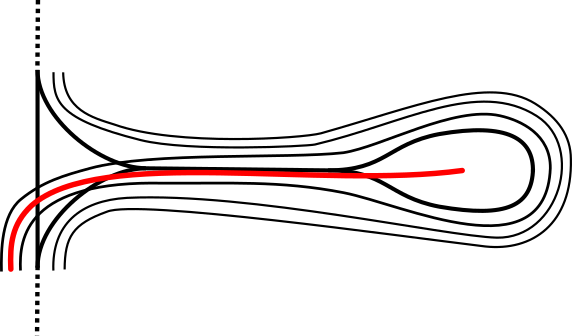}
   \put(80,10){$3$}
   \put(42,15){$6$}
   \put(14,14){$4$}
   \put(14,40){$2$}
   \put(2,30){$0$}
  \end{overpic}
  \caption{}
 \end{subfigure}
 \caption{A part of a train track where an arc is steered left.}
 \label{fig:TrackSegment}
\end{figure}

\textbf{Initial segment:} The arc $\beta$ starts on rail $6$ and then travels on rail $11$, meaning $w_2 < w_{11}$. After this, $\beta$ intersects the arc $\alpha$ from the top along rail $0$ or $1$, or from the bottom along rail $0$, meaning $w_9 < w_2$ or $w_1 < w_2$. In the case where $\beta$ intersects $\alpha$ from the bottom while traveling along rail $0$, we must enforce that $\beta$ does not then undo this intersection by traveling along rail $8$ and intersecting $\alpha$ from the top, as seen in Figure \ref{fig:badInitial}.

\begin{figure}[h]
 \begin{center}
  \includegraphics[scale=0.6]{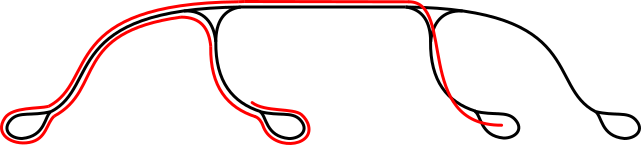}
  \caption{Initial intersection that can be homotoped off.}
  \label{fig:badInitial}
 \end{center}
\end{figure}

Therefore, if $w_9 < w_2$ and $w_0 < w_2 - w_9 + w_8$, then $w_2 - w_9 + w_8 < w_{10}$ or $w_2 + w_8 - w_{10} < w_1$. Using the switch conditions, we translate these into conditions on $w_0$, $w_1$, $w_2$, $w_3$, and $w_{14}$.
\begin{enumerate}[\indentedlist {I}1.]
 \indentedlist
 \item 
  $w_3 < \frac{w_{14}}{2}$
 \item
  $w_1 + \frac{w_{14}}{2}< w_0 + w_2$ \quad or \quad $w_1 < w_2$
 \item 
  $w_0 + w_2 < w_1 + \frac{w_{14}}{2}$ \quad or \quad $w_0 + w_2 < w_{14}$ 
\quad or \quad $w_0 + w_1 + w_2 < 3\frac{w_{14}}{2}$ \quad or \quad $w_1 + w_2 
< w_{14}$
\end{enumerate}

\textbf{Terminal segment:} The terminal segment of $\beta$ is seen in Figure \ref{fig:terminalPath}.
\begin{figure}
 \begin{center}
  \includegraphics[scale=0.6]{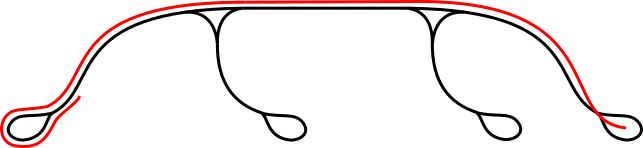}
  \caption{Terminal segment of an arc.}
  \label{fig:terminalPath}
 \end{center}
\end{figure}
This translates into the conditions $w_{12} < w_3$, $w_9 < w_3 - w_{12} + w_{11}$, and $w_0 < w_3 - w_{12} + w_{11} - w_9 + w_8$. Immediately prior to taking this path, the arc cannot travel on rail $1$ intersecting $\alpha$ from below. This gives the fourth condition
$$w_3 - w_{12} + w_{11} - w_9 + w_8 < w_{10} \quad \text{ or } \quad w_3 - 
w_{12} + w_{11} + w_8 - w_{10} < w_1.$$
Again using the switch conditions, we express these in terms of $w_0$, $w_1$, $w_2$, $w_3$, and $w_{14}$.
\begin{center}
 \begin{tabular}{clccl}
  T1. & $w_2 < \frac{w_{14}}{2}$ & \qquad \qquad & T3. & $w_3 < w_0$\\
  T2. & $w_1 + w_3 < w_0 + \frac{w_{14}}{2}$ && T4. & $w_0 + w_1 < w_3 + \frac{w_{14}}{2}$ \quad or \quad $w_1 < w_3$
 \end{tabular}
\end{center}

In order to prove that the conditions we have given are sufficient, we will use the following lemma.

\begin{lemma}[\cite{FM12}, Proposition 1.7]
 \label{lem:diagon}
 Two transverse simple closed curves $\alpha$ and $\beta$ in a surface $S$ cannot be homotoped to have fewer intersections if and only if they do not form a ``bigon'', that is, an embedded disk in $S$ whose boundary is the union of an arc of $\alpha$ and an arc of $\beta$.
\end{lemma}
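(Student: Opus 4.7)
The plan is to handle the two implications separately, with the reverse implication being the substantive one. For the forward direction (a bigon exists $\Rightarrow$ the intersection count can be reduced), I would first pass to an \emph{innermost} bigon $D$, namely one minimizing the number of points of $(\alpha \cup \beta) \cap \mathrm{int}(D)$. Since any bigon properly contained in $D$ would have smaller such intersection, an innermost $D$ must have $\mathrm{int}(D)$ disjoint from both curves. I would then homotope the boundary arc $a = \partial D \cap \alpha$ across $D$ so that it sits just on the $\beta$-side of $b = \partial D \cap \beta$, supported in a small neighborhood of $\overline{D}$. This cancels the two corner intersections of $\partial D$ and, because $\mathrm{int}(D)$ meets neither curve, introduces no new intersections, so $|\alpha \cap \beta|$ drops by exactly two.

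For the reverse direction (no bigon $\Rightarrow$ minimal position), I would argue the contrapositive by passing to the universal cover $\pi : \widetilde{S} \to S$. In the cases of interest one has $\chi(S) < 0$, so $\widetilde{S}$ is an open disk. The core fact I would invoke is that any two distinct lifts $\widetilde{\alpha}$, $\widetilde{\beta}$ of simple closed curves are properly embedded lines in $\widetilde{S}$ with distinct pairs of endpoints on the circle at infinity, and any two such lines intersect in at most one point. This is a Jordan-curve-type argument: a second intersection would cut out a disk in $\widetilde{S}$ whose boundary consists of a subarc of each lift, and projecting this disk to $S$ while inspecting the deck action would produce either a bigon downstairs or a self-intersection, violating simplicity or the hypothesis. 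Granting this ``one-intersection-per-pair-of-lifts'' fact, one sees that the algebraic count of intersections in $S$, stratified by pairs of lifts modulo the deck group, gives an upper bound for the intersection number attainable in the homotopy class. If $\alpha$ and $\beta$ are \emph{not} in minimal position, $|\alpha \cap \beta|$ strictly exceeds this bound, forcing two lifts of the same curve to cross a common lift of the other at adjacent points, which cobound a disk $\widetilde{D} \subset \widetilde{S}$.

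The main obstacle will be the final descent step: showing that an innermost such $\widetilde{D}$ projects under $\pi$ to an \emph{embedded} bigon in $S$, rather than a disk that overlaps itself. The resolution is that any self-overlap would either yield a bigon in $\widetilde{S}$ strictly contained in $\widetilde{D}$, contradicting innermost-ness, or exhibit two lifts meeting more than once, contradicting the one-intersection property. Once embeddedness is secured, the projected bigon in $S$ realizes the bigon whose existence was to be shown, completing the contrapositive. Combining this with the forward direction gives the lemma, and I expect the bookkeeping in the descent step to be the only place where significant care is required.
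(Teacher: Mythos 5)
The paper does not prove this lemma at all: it is quoted verbatim from Farb--Margalit \cite{FM12}, Proposition 1.7, and your sketch is essentially their standard argument --- remove an innermost (empty) bigon to reduce intersections, and for the converse pass to the universal cover, use endpoints at infinity, show that absent bigons each pair of lifts crosses at most once, and deduce minimality from the resulting homotopy-invariant count. So the route is the right one; the issues are local, but two of them need repair. First, the counting step is misstated. The homotopy-invariant quantity is the number of deck-orbits of pairs of lifts $(\widetilde{\alpha},\widetilde{\beta})$ whose endpoint pairs at infinity link; it is a \emph{lower} bound for the number of intersections of any transverse representatives (linked lifts must cross), and it equals $|\alpha\cap\beta|$ precisely when every pair of lifts crosses at most once. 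Hence if $\alpha$ and $\beta$ are not in minimal position, what is forced is that a \emph{single} lift of $\alpha$ meets a \emph{single} lift of $\beta$ in at least two points. Your phrase ``two lifts of the same curve cross a common lift of the other at adjacent points'' describes a configuration that does not cobound a bigon-type disk, since its boundary would use arcs of two different lifts of $\alpha$; as written that step fails, although your subsequent descent discussion implicitly uses the correct configuration (one arc of each lift).

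Second, the descent step as you state it is circular: you exclude self-overlap of the projected disk by invoking ``the one-intersection property,'' but that property is exactly what the descent is meant to establish. The standard fix is to choose $\widetilde{D}$ innermost among \emph{all} disks cut off by \emph{all} pairs of lifts, so that any nontrivial overlap by a deck translate produces a strictly smaller such disk, a contradiction; with that choice the projection is embedded and yields the bigon in $S$. Minor points: ``minimizing the number of points of $(\alpha\cup\beta)\cap\mathrm{int}(D)$'' is degenerate (the set is infinite as soon as an arc passes through), so minimize the number of crossings or of arc components in the interior instead; and the restriction to $\chi(S)<0$ is harmless for the paper's application to the punctured disk, though the lemma as stated covers low-complexity surfaces, which require a separate but elementary check.
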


\begin{proposition}
 \label{prop:sufficiency}
 The conditions D1-D4, N1-N6, I1-I3, and T1-T4 are necessary and sufficient for the weights $(w_i)_{i=0}^{14}$ to give rise to a multicurve $\gamma$, one component of which is a regular neighborhood of an arc $\beta$ belonging to a Burau pair $(\alpha, \beta)$ subject to the reductions of Section \ref{subsect:standardForms}.
\end{proposition}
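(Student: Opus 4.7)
The plan is to split the argument cleanly into necessity, which amounts to bookkeeping from the preceding discussion, and sufficiency, which is where the substantive content lies. For necessity, I would step through the four families of conditions in order. D1--D4 follow from the parity of incidences at the switches adjacent to rails $r_6$ and $r_7$ forced by $\beta$ having its endpoints at $p_3$ and $p_4$. N1--N6 simply assert that the six weights $w_8,\ldots,w_{13}$ derived from the switch relations are nonnegative. Finally, I1--I3 and T1--T4 were each extracted above by explicit steering arguments for the initial and terminal segments, so necessity in those cases amounts to citing that derivation.

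For sufficiency, I would start with a tuple $(w_0, w_1, w_2, w_3, w_{14})$ satisfying every listed condition and build up the claimed Burau pair in stages. First, I would use the switch relations together with N1--N6 to produce all $15$ weights as nonnegative integers, which assemble into a simple multicurve $\gamma$ carried by $\mu$. Next, I would use D1--D4 to single out exactly one component $\gamma_0$ of $\gamma$ whose regular neighborhood (thickened inward) retracts onto an embedded arc $\beta$ from $p_3$ to $p_4$, the remaining components being honest simple closed curves that contribute no arc. After that, I would invoke the steering analysis recorded in the derivations of I1--I3 and T1--T4 to argue that $\beta$ must exit $p_3$ along rail $r_6$ onto $r_{11}$ and meet $\alpha$ in one of the configurations of Figure \ref{initial}, and similarly for its terminal end on $r_7$ with Figure \ref{terminal}.

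The step I expect to be the main obstacle is verifying that $\alpha$ and $\beta$ \emph{essentially} intersect, i.e.\ that their union bounds no bigon in $D_4$. Here I would apply Lemma \ref{lem:diagon}. The only places a bigon could hide, given that $\beta$ is carried efficiently on $\mu$, are immediately after $\beta$'s first crossing of $\alpha$ or immediately before its last crossing; elsewhere, the train-track structure itself already enforces minimal intersection. The ``or'' disjuncts built into I3 and T4 were designed precisely to exclude these configurations, as illustrated by Figure \ref{fig:badInitial} for the initial case. My plan is to translate each would-be bigon into a sequence of weight inequalities at consecutive switches of $\mu$ and verify that at least one listed disjunct is violated in every such configuration, using the switch relations to reduce everything to inequalities in $w_0, w_1, w_2, w_3, w_{14}$. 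Once bigons are excluded on both ends, $(\alpha, \beta)$ is a Burau pair in the standard form of Section \ref{subsect:standardForms}, and sufficiency follows.
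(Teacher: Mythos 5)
Your overall skeleton (necessity as bookkeeping; sufficiency reduced, via Lemma \ref{lem:diagon}, to showing the intersections of $\alpha$ and $\beta$ are essential) matches the paper's, but the decisive step is missing. You assert that away from the first and last crossings ``the train-track structure itself already enforces minimal intersection,'' and then propose to hunt down the remaining end bigons by an unspecified case analysis of weight inequalities showing I3/T4 are violated. Neither half of this is justified as stated. The paper's track $\mu$ is a \emph{weakened} train track (its complement is allowed to contain disk components), so being carried by $\mu$ does not by itself force minimal position with $\alpha$; the claim that no bigon can occur in the interior of $\beta$ is exactly what has to be proved, and your proposal offers no mechanism for proving it, nor for showing that the end configurations you plan to exclude are the \emph{only} possible nonessential ones.

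The paper closes this gap with a short parity argument that your proposal does not contain. Given two consecutive intersections $i_1,i_2$ of $\beta$ with $\alpha$, the loop $\eta$ formed by the segment of $\beta$ from $i_1$ to $i_2$ and the segment of $\alpha$ from $i_2$ to $i_1$ is itself carried by $\mu$, hence has weights $(v_i)_{i=0}^{14}$; a puncture $p_i$ lies inside $\eta$ exactly when $v_{i-1}$ is odd; and if $v_0,\dots,v_3$ were all even, the switch conditions would force every $v_i$ to be even, making $\eta$ a proper multicurve --- impossible for a single loop. Hence some puncture lies inside $\eta$, so $\eta$ bounds no bigon, and the only nonessential configuration supported on $\mu$ is the one in Figure \ref{fig:badInitial} (and its terminal analogue), which I1--I3 and T1--T4 were designed to rule out. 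Without this argument (or a substitute for it), your sufficiency proof does not go through: the reduction to ``bigons can only hide at the ends'' is an assertion, and the promised inequality check is a plan rather than a proof.
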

\begin{proof}
 The necessity is clear. To show the sufficiency of the conditions, we must show that the initial and terminal intersections are essential. We prove a stronger statement, that the only nonessential intersection between $\alpha$ and $\beta$ supported on $\mu$ is of the form seen in Figure \ref{fig:badInitial}. Let $\alpha$ and $\beta$ intersect at $i_1$. Continuing to follow $\beta$, assuming it is not of the form in Figure \ref{fig:badInitial}, it must wind around a puncture $p_i$ before intersecting $\alpha$ again. Denote the next intersection of $\beta$ with $\alpha$ by $i_2$, and consider the loop $\eta$ formed by $\beta$ from $i_1$ to $i_2$ and $\alpha$ from $i_2$ to $i_1$. As $\eta$ is supported on the track, there is a corresponding set of weights $(v_i)_{i=0}^{14} \in \N^{15}$. The puncture $p_i$ will be interior to $\eta$ if and only if the corresponding weight $v_{i-1}$ is odd. If $v_0$, $v_1$, $v_2$, and $v_3$ are even, then by the switch conditions all the weights are even, implying that $\eta$ is a proper multicurve. Therefore one weight must be odd, so some $p_i$ is interior to $\eta$, and the result now follows from Lemma \ref{lem:diagon}.
\end{proof}

\begin{corollary}
 \label{cor:interesectionCount}
 Let $(w_i)_{i=0}^{14}$ be a set of weights as in Proposition \ref{prop:sufficiency} giving rise to a loop (and not a proper multicurve). Denote by $\beta$ the resulting arc. Then the number of essential intersections of $\alpha$ with $\beta$ is
  $$\frac{w_0}{2} + \frac{w_1}{2} - \min(w_1, w_8).$$
\end{corollary}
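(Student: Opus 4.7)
The plan is to compute $|\alpha \cap \beta|_{\mathrm{ess}}$ in three stages: first count the raw transverse intersections of $\gamma$ with $\alpha$ carried on the track $\mu$, then convert this $\gamma$-count to a $\beta$-count via the regular neighborhood, and finally subtract a correction for nonessential intersections.

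First, I would identify which rails of $\mu$ the arc $\alpha$ meets transversely. Inspecting Figure~\ref{fig:implementationTrainTrack}, the straight-line arc $\alpha$ joining $p_1$ to $p_2$ crosses $\mu$ only along the two loops $r_0$ and $r_1$, once each. Summing the weights on these rails gives $|\alpha \cap \gamma| = w_0 + w_1$.

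Second, because $\gamma$ bounds a regular neighborhood of $\beta$ and the endpoints $p_1, p_2$ of $\alpha$ do not lie on $\beta$ (whose endpoints are $p_3, p_4$), each transverse intersection of $\beta$ with $\alpha$ lifts to exactly two transverse intersections of $\gamma$ with $\alpha$, one on each side of the local strip neighborhood of $\beta$. Consequently, before adjusting for essentiality, the intersection count is $\frac{w_0 + w_1}{2} = \frac{w_0}{2} + \frac{w_1}{2}$.

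Third, I would subtract the nonessential intersections. By the proof of Proposition~\ref{prop:sufficiency}, the only nonessential intersections between $\alpha$ and $\beta$ supported on $\mu$ arise from the ``initial kink'' of Figure~\ref{fig:badInitial}: a sub-arc of $\beta$ crosses $\alpha$ via the loop $r_1$ near $p_2$, passes over the bridge rail $r_8$, and crosses $\alpha$ again via the loop $r_0$ near $p_1$, with the two crossings cobounding a bigon. Each such kink consumes one strand from $r_1$ and one from $r_8$, so the maximum number of simultaneously realizable kinks is $\min(w_1, w_8)$. Subtracting yields the desired count.

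The main obstacle is the final step: establishing that the number of removable kinks is exactly $\min(w_1, w_8)$, with none missed and none overcounted. This demands a careful combinatorial analysis of strand routing through the switches joining $r_1, r_5, r_8$ and $r_0, r_4, r_8$, together with the single-loop hypothesis on $\gamma$, so that each $r_8$-strand pairs with a unique $r_1$-strand to form exactly one removable bigon. Splitting into cases according to whether $w_1 \le w_8$ or $w_8 < w_1$ should close the argument, as the smaller weight bounds the matching and the residual strands on the other rail are absorbed elsewhere (via $r_9$ or $r_{10}$) without creating further bigons.
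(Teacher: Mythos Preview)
The paper states this corollary without proof, so there is nothing to compare against directly; your three–stage strategy (count $\alpha\cap\gamma$ on the track, halve to pass to $\beta$, then remove bigons via Proposition~\ref{prop:sufficiency}) is exactly the intended route. Steps 1 and 2 are fine: $\alpha$ meets $\mu$ only in the loops $r_0,r_1$, and since $p_1,p_2\notin\beta$ the doubling is legitimate.

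Step 3, however, is where the real content lies and your argument does not close it. First, the bookkeeping is off: a bigon of $\beta$ with $\alpha$ carries \emph{two} inessential intersection points, while the number of $\beta$-strands on $r_1$ or $r_8$ is $w_1/2$ or $w_8/2$ (not $w_1,w_8$). These two factors of $2$ cancel in the end, so the formula survives, but your sentence ``the maximum number of simultaneously realizable kinks is $\min(w_1,w_8)$'' is not correct as written. Second, the bigon you describe visibly uses a strand of $r_0$ as well (you say so yourself), so a priori the bound is $\min(w_0,w_1,w_8)$; that $w_0$ can be dropped is a consequence of the inequalities I1--T4 (one checks in each of Cases I--III that $w_{14}/2>w_1$, hence $w_0>w_8$), and this needs to be said. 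Third, and most importantly, you only argue an upper bound on the number of bigons and then gesture at a case split; the corollary requires \emph{equality}. What is missing is the verification that every strand of $r_8$ (when $w_8\le w_1$), respectively every strand of $r_1$ routed through $r_8$ (when $w_1\le w_8$), actually joins its neighbours on $r_0$ and $r_1$ with the orientations producing the Figure~\ref{fig:badInitial} pattern rather than one which winds around $p_1$ or $p_2$. That combinatorial check against the switch orderings is the substance of the corollary and your proposal leaves it undone.
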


There are two ways we can prevent certain proper multicurves from arising. The first is to insist that $\gcd(w_0, w_1, w_2, w_3) = 1$, as otherwise all weights will share a non-trivial common factor and give rise to multiple copies of a single, simple closed curve. The second is to insist that some weight is zero, as otherwise the resulting multicurve will have a loop encompassing the entire train track and thus be a proper multicurve. The paths of the initial and terminal segments of the corresponding arc imply that all weights except for $w_1$, $w_8$, $w_9$, and $w_{12}$ are non-zero. Since $w_1$ being zero implies that $w_8$ and $w_9$ are zero, we have three cases.
\begin{enumerate}[(I)]
 \indentedlist
 \item
  $w_8 = 0$, meaning $\frac{w_{14}}{2} = w_0 + w_1$
 \item
  $w_9 = 0$, meaning $\frac{w_{14}}{2} = w_0 - w_1$
 \item
  $w_{12} = 0$, meaning $\frac{w_{14}}{2} = w_2 + w_3$
\end{enumerate}
All three cases can occur simultaneously. In each case, many of the above conditions become satisfied or simplified. The conditions D1-D4 remain in all cases, and the other conditions become the following.
\begin{center}
 Case I.
 
 N1. $w_0 + w_1 \leq w_2 + w_3$ \qquad I2. $w_1 < w_2$ \qquad T1. $w_2 < w_0 + w_1$ \qquad  T3. $w_3 < w_0$
\end{center}

\begin{center}
 Case II.
 
 \begin{tabular}{clccl}
  N5. & $w_0 \leq w_1 + w_2 + w_3$ & \qquad \qquad & I3. & $w_1 + w_3 < w_0$\\
  I1. & $w_0 < w_2 + 2w_1$ \quad or \quad $4w_1 + w_2 < 2w_0$  && T1. & $w_1 + w_2 < w_0$\\
  I2. & $w_1 < w_2$ && T4. & $w_1 < w_3$\\
 \end{tabular}
\end{center}

\begin{center}
 Case III.
 
 \begin{tabular}{clccl}
  N1. & $w_2 + w_3 \leq w_0 + w_1$ & \qquad & T2. & $w_1 < w_2 + w_0$\\
  N2. & $w_0 \leq w_1 + w_2 + w_3$ && T3. & $w_3 < w_0$\\
  I2. & $w_1 + w_3 < w_0$ \quad or \quad $w_1 < w_2$ && T4. & $w_0 + w_1 < w_2 + 2w_3$ \quad or \quad $w_1 < w_3$\\
  I3. & \multicolumn{4}{l}{$w_0 < w_1 + w_3$ \quad or \quad $w_2 + 2w_3 < w_0$ \quad or \quad $w_0 + w_1 < 2w_2 + 3w_3$}\\
 \end{tabular}
\end{center}

When traveling on rail $14$ either moving left or right, the conditions on the initial and terminal segments imply that there are only a finite number of possible paths one may take before returning to rail $14$. These paths are enumerated in Figure \ref{fig:paths}.
\begin{figure}
 \centering
 \begin{subfigure}{0.4\textwidth}
  \centering
  \includegraphics[scale=0.4]{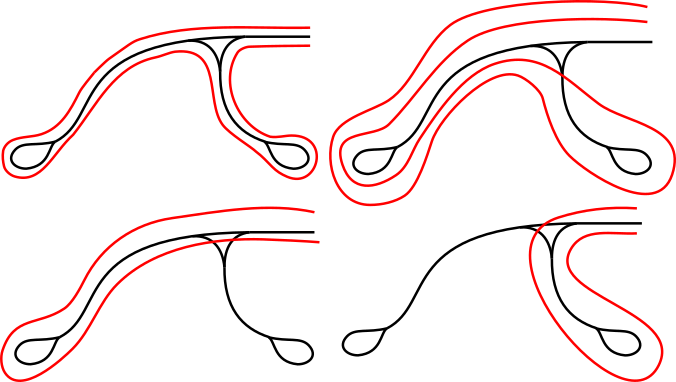}
  \captionof{figure}{Left paths.}
  \label{fig:leftPaths}
 \end{subfigure}
 \qquad \qquad
 \begin{subfigure}{0.4\textwidth}
  \centering
  \includegraphics[scale=0.4]{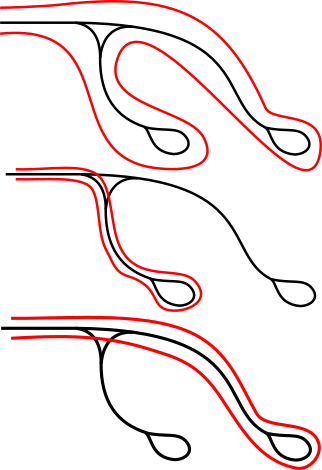}
  \captionof{figure}{Right paths.}
  \label{fig:rightPaths}
 \end{subfigure}
 \caption{Enumeration of paths.}
 \label{fig:paths}
\end{figure}
Let $\ell$ denote the number of lines to the left of the arc as it travels on rail 14. Which of the enumerated paths is taken, along with the direction this path is traversed, is determined by comparing $\ell$ to the train track weights. When traveling to the left, the path and direction are determined by the position of the first number that $\ell$ is smaller than in the following list.
 $$\min(w_1, w_8, w_9),\ \min(w_1, w_9),\ w_9,\ w_0 + w_9 - w_8,\ w_{10} + w_9 - w_8,\ w_1 + w_{10} - w_8,$$
 $$w_{14} - w_1,\ \min(w_{10}, w_0 + w_{10} - w_8, 2w_{10} - w_8)$$
Likewise when traveling to the right, the path and direction are determined by the position of the first number that $\ell$ is smaller than in the following list.
 $$\min(w_3, w_{12}, w_{13}),\ \min(w_3, w_{13}),\ w_{13},\ w_2 + w_{13} - w_{12},\ w_{11} + w_{13} - w_{12}$$
Note that for a given set of weights, it may be that either of the lists are not ascending. This can occur when an arc never takes one of the paths enumerated in Figure \ref{fig:leftPaths} and Figure \ref{fig:rightPaths}. 

To compute the resulting Burau polynomial, our algorithm begins with $\ell = w_2$ and the Burau polynomial being $0$. We then alternate traveling left and right on rail $14$. Each time a path is taken, we update $\ell$ and the Burau polynomial accordingly. We end the computation when the arc reaches rail $7$ with $\ell = w_3$. While traveling right on rail 14, this is precisely when we have either $\ell = w_3$ and $\ell < \ w_{13}$, or $\ell$ is greater than all the numbers in the list above and $\ell = 3w_3 + w_{14}$. Along with being computationally efficient, this allows us to concisely describe the path of an arc. Each time we return to rail 14, using the appropriate list we record the position of the first number that $\ell$ is smaller than, $1-8$ and $1-6$ respectively. We call the resulting list \emph{the record of the arc $\beta$}.

We achieve a significant speedup by doing a preliminary run on each arc, recording the number of positive and negative crossings. We discard the weight set if a proper multicurve is ever detected, which by Corollary \ref{cor:interesectionCount} is the case if and only if the number of crossings is less than $\frac{w_0}{2} + \frac{w_1}{2} - \min(w_1, w_8)$. During the preliminary run, we also use a static array of size $2^m$ to compute the Burau polynomial where the powers of $t$ are treated modulo $2^m$. On our system, $m = 7$ appears to be optimal. When we are solely searching for the zero polynomial, we short-circuit the computation if the array is not all zero.

 \section{Analysis}
  For an arc $\beta$ belonging to a Burau pair $(\alpha, \beta)$, we denote by $\vec{w}_\beta$ the corresponding weights on $\mu$, $c(\beta)$ the number of crossings with $\alpha$, and $p_\beta(t)$ the resulting Burau polynomial. For each $k \in \N$, \emph{the arcs of level $2k$} are $\set{\beta : c(\beta) = 2k}$. Our implementation has resulted in the following theorem.

\begin{theorem}
 For any arc $\beta$ with $c(\beta) \leq 2000$, $p_\beta(t) \neq 0$.
\end{theorem}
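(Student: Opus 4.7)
The plan is to prove this by exhaustive computer search, leveraging all the structural work from Sections 3 and 4 to reduce the problem to a finite, mechanically checkable collection of cases for each intersection level.

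First, I would invoke the reductions from Section \ref{subsect:standardForms}. By Theorem~\ref{the:criterion} and Reductions I--III, any hypothetical Burau pair $(\alpha',\beta')$ witnessing unfaithfulness may be braided so that $\alpha$ is the fixed straight-line arc joining $p_1$ and $p_2$, while $\beta$ is of type $(3,4)$ whose initial and terminal segments are among the standard choices of Figure \ref{segments}, and whose geometric intersection number with $\alpha$ is even and positive. By Proposition~\ref{preserve}, this braiding preserves the Burau polynomial, so it suffices to rule out the vanishing of $p_\beta(t)$ across this restricted family.

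Next, I would use Proposition~\ref{pro:universal} to realize every such $\beta$ as a subcurve of a multicurve carried by the universal train track $\mu$, thereby encoding $\beta$ by a weight tuple $(w_0,w_1,w_2,w_3,w_{14}) \in \N^5$. Proposition~\ref{prop:sufficiency} shows that admissible tuples are exactly those satisfying the finite system of linear conditions D1--D4, N1--N6, I1--I3, T1--T4, and Corollary~\ref{cor:interesectionCount} gives $c(\beta) = \tfrac{w_0}{2} + \tfrac{w_1}{2} - \min(w_1,w_8)$. For each even $2k$ with $2 \leq 2k \leq 2000$, the set of admissible tuples with $c(\beta) = 2k$ is finite and can be enumerated, after reducing to $\gcd(w_0,w_1,w_2,w_3)=1$ and splitting into the three boundary Cases I, II, III identified at the end of Section 4 (so that at least one of $w_8,w_9,w_{12}$ vanishes and no proper multicurve encircles the full track).

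For each surviving tuple, I would execute the algorithm described in Section 4: do a preliminary left/right traversal of rail $14$ tallying signed crossings; if the crossing count is strictly less than $\tfrac{w_0}{2}+\tfrac{w_1}{2}-\min(w_1,w_8)$, discard the tuple as a proper multicurve; otherwise compute the Burau polynomial by the path-recording procedure, and verify that it is nonzero. Since the condition to be verified is merely nonvanishing (not classification), the short-circuit trick using a static array of size $2^7$ indexed by exponents modulo $128$ applies, allowing early termination as soon as any coefficient-sum-in-residue becomes nonzero.

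The main obstacle is computational feasibility rather than logical subtlety: the count of admissible tuples with $c(\beta) = 2k$ grows rapidly in $k$, so pushing the bound as high as $2000$ requires that the pruning via the divisibility and steering conditions, the boundary-case split, and the modular short-circuit all be implemented efficiently enough to keep the total search time within the available hundreds of hours of CPU budget. Assuming the implementation terminates on every admissible tuple with a certificate $p_\beta(t)\neq 0$, the theorem follows immediately from the exhaustiveness guaranteed by Proposition~\ref{prop:sufficiency} and the reductions of Section~\ref{subsect:standardForms}.
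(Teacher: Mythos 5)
Your proposal is correct and follows essentially the same route as the paper: the theorem is established by the exhaustive computer search described in Section 4, using the reductions of Section \ref{subsect:standardForms}, the weight-tuple encoding on the universal track $\mu$ with conditions D1--D4, N1--N6, I1--I3, T1--T4 (Proposition \ref{prop:sufficiency}), the intersection count of Corollary \ref{cor:interesectionCount}, the boundary cases forcing one of $w_8, w_9, w_{12}$ to vanish, and the preliminary run with the modular short-circuit. No substantive differences to report.
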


Finding no arc-pair asserting the unfaithfulness of $\Psi_4$, we proceed with a more qualitative analysis. Define the minimum norm and multiplicity of a level $2k$ to be
\begin{align*}
 \minnorm(2k) &= \min_{\beta} \set{|p_\beta(t)| : c(\beta) = 2k}\\
 \mult(2k) &= \#\set{\beta : |p_\beta(t)| = \minnorm(2k)}.
\end{align*}
The initial levels have erratic behavior.
\begin{center}
 \begin{tabular}{c|c|ccc|c|ccc|c|c}
  $2k$ & $\minnorm(2k)$ & $\mult(2k)$ & \  & $2k$ & $\minnorm(2k)$ & $\mult(2k)$ & \  & $2k$ & $\minnorm(2k)$ & $\mult(2k)$\\
  \cline{1-3} \cline{5-7} \cline{9-11}
  2 & 2 & 5 && 16 & 8 & 24 && 30 & 8 & 6\\
  4 & 4 & 37 && 18 & 8 & 6 && 32 & 8 & 12\\
  6 & 6 & 115 && 20 & 8 & 24 && 34 & 8 & 18\\
  8 & 6 & 20 && 22 & 6 & 6 && 36 & 10 & 36\\
  10 & 8 & 86 && 24 & 8 & 18 && 38 & 8 & 6\\
  12 & 8 & 30 && 26 & 8 & 12 && 40 & 8 & 18\\
  14 & 8 & 18 && 28 & 8 & 24 && 42 & 10 & 24\\
 \end{tabular}
\end{center}

However after this appears remarkable periodicity.
\begin{theorem}
 \label{thm:periodicity}
 For $2k \in [44, 500]$,
 $$
  \minnorm(2k) = \left\{\begin{array}{cc}10 & \text{if } 2k \equiv 0 \mod{6}\\ 8   & \text{otherwise} \end{array}\right. \qquad \mult(2k) = \left\{\begin{array}{cc}6 & \text{if } 2k \equiv 2 \mod{6}\\ 18 & \text{otherwise} \end{array}\right..
 $$
\end{theorem}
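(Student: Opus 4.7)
The plan is to verify Theorem~\ref{thm:periodicity} by direct computer enumeration, using the machinery developed in Section 4. For each even integer $2k \in [44,500]$, I would enumerate every weight tuple $(w_0,w_1,w_2,w_3,w_{14}) \in \mathbb{N}^5$ satisfying the conditions D1--D4, N1--N6, I1--I3, T1--T4 of Proposition~\ref{prop:sufficiency}, together with the conditions of Section 4 that exclude proper multicurves (namely $\gcd(w_0,w_1,w_2,w_3)=1$ and at least one of Cases I, II, III), and producing an arc $\beta$ with $c(\beta)=2k$. For each such tuple, I would compute the Burau polynomial $p_\beta(t)$ via the algorithm described at the end of Section 4, record its norm $|p_\beta(t)|$, and then extract $\minnorm(2k)$ and $\mult(2k)$ by scanning the resulting data.

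The enumeration is finite for each $2k$. By Corollary~\ref{cor:interesectionCount}, the identity $\tfrac{w_0}{2}+\tfrac{w_1}{2}-\min(w_1,w_8)=2k$ bounds $w_0$ and $w_1$, after which the switch conditions together with N1--N6 bound $w_2$, $w_3$, and $w_{14}$. Once these bounds are made explicit, a straightforward nested loop produces the full list of candidate tuples for the given level. Each surviving tuple is passed through the path-enumeration routine of Figure~\ref{fig:paths}, which tracks the quantity $\ell$ as the arc alternates left-right traversals of rail $14$ until termination on rail $7$ with $\ell=w_3$, accumulating the signed contributions $\pm t^j$ to $p_\beta(t)$. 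Comparison of the resulting $(\minnorm(2k),\mult(2k))$ against the formula predicted by the theorem, across all 229 values of $2k$ in the range, completes the verification.

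The main obstacle is runtime: the number of valid tuples with intersection number $2k$ grows with $k$, and a full polynomial computation per tuple is too expensive at this scale. I would rely on the two-stage strategy already indicated at the end of Section 4. In the preliminary pass per tuple, I would count signed crossings and reject proper multicurves via the test that the crossing count is less than $\tfrac{w_0}{2}+\tfrac{w_1}{2}-\min(w_1,w_8)$; simultaneously, I would accumulate the coefficients of $p_\beta(t)$ modulo $2^m$ into a static array of length $2^m$ (taking $m=7$, as found optimal in the implementation). Only tuples whose mod-$2^m$ norm is at or below the current best candidate need a full-precision polynomial computation to confirm their norm exactly. This filtering, combined with parallelization across the 229 levels, makes the computation feasible within the reported time budget; without it, reaching $2k=500$ is out of reach. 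The theorem itself is then a certificate output by the algorithm, with no further pattern-based argument invoked.
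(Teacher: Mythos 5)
Your proposal is correct and matches the paper's approach: Theorem~\ref{thm:periodicity} is established there exactly by exhaustive enumeration of admissible weight tuples per level using the conditions of Proposition~\ref{prop:sufficiency}, proper-multicurve rejection via Corollary~\ref{cor:interesectionCount}, and the rail-14 path-following computation of each $p_\beta(t)$, with the full list of norms then yielding $\minnorm(2k)$ and $\mult(2k)$. Your mod-$2^m$ folding filter is a sound extra optimization (the folded norm is a lower bound for the true norm), but it does not change the substance of the argument.
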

The periodic behavior in Theorem \ref{thm:periodicity} is a result of the arcs falling into into families. 
\begin{example}
 \label{exmample:minnormfamily}
 Let $2k \equiv 0 \mod{6}$ and $44 \leq 2k  \leq 500$. Setting $m = \frac{2k - 48}{6}$, the weights 
  $$w_0 = 22 + 2m,\ w_1 = 74 + 10m,\ w_2 = 89 + 12m,\ w_3 = 21 + 2m,\ w_{14} = 192 + 24m$$
 give rise to arcs $\set{\beta_m}_{m=0}^{158}$ with $c(\beta_m) = 2k$, $|p_{\beta_m}(t)| = \minnorm(2k)$, and
  $$p_{\beta_m}(t) = -t^{-5} - t^{-3} - t^{-2} - t^{0} - t^{18+3m} + t^{19+3m} - 2t^{20+3m} + t^{21+3m} - t^{22+3m}.$$
\end{example}

Our computations have shown that all arcs $\beta$ with $c(\beta) \in [44, 500]$ and $|p_\beta(t)| = \minnorm(c(\beta))$ fall into $6+18+18$ families whose weights are in an arithmetic progression and whose ordered set of coefficients of the resulting Burau polynomials are constant.

There are a variety of ways that these families arise, the simplest is as follows. Let $\beta$ be an arc, $p$ a point on $\beta$, and $\gamma$ a loop that only intersects itself or $\beta$ at $p$. Then with some perturbations one can construct new arcs by repeatedly inserting $\gamma$ into $\beta$. An example of this form is seen in Figure \ref{fig:exampleSimpleFamily}. Note that doing this does not guarantee that the resulting Burau polynomial norms will all be equal, nor that the resulting polynomials themselves will all be of a similar form. More complicated patterns also give rise to families, as seen in Figure \ref{fig:exampleComplexFamily} where there are two loops inserted in an interleaving fashion.

\begin{figure}[!h]
 \centering
 \includegraphics[width=.95\linewidth]{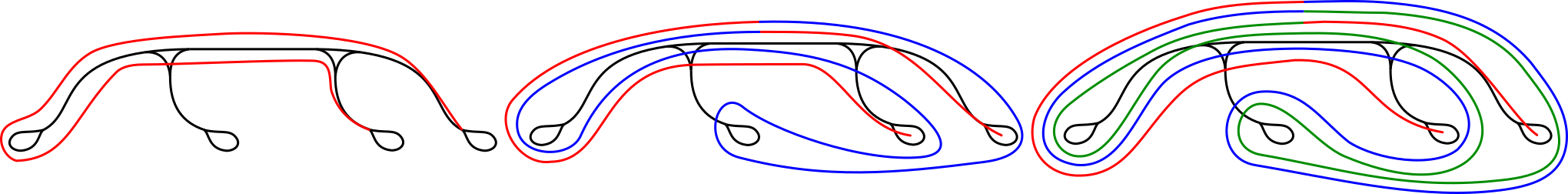}
 \captionof{figure}{Simple family of arcs.}
 \label{fig:exampleSimpleFamily}
\end{figure}

\begin{figure}[!h]
 \centering
 \includegraphics[width=.95\linewidth]{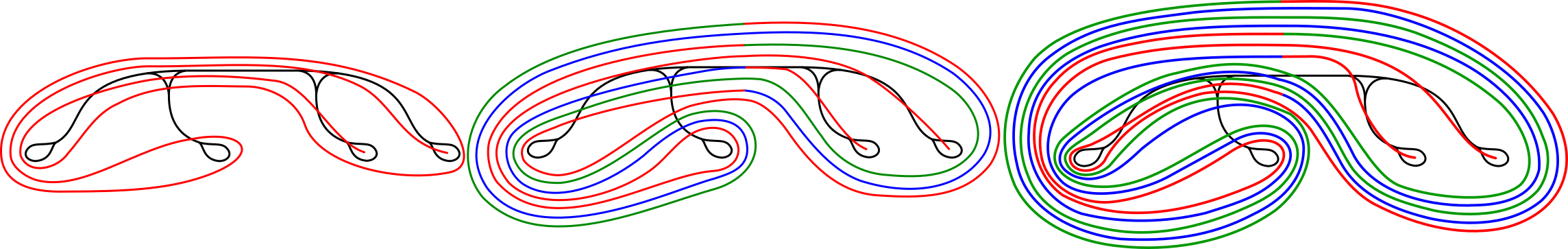}
 \captionof{figure}{Complex family of arcs.}
 \label{fig:exampleComplexFamily}
\end{figure}

The families in Figure \ref{fig:exampleSimpleFamily} and Figure \ref{fig:exampleComplexFamily} are not one of the $6+18+18$ families in Theorem \ref{thm:periodicity}. Indeed, the families in Theorem \ref{thm:periodicity} would be quite difficult to draw. In order to see their behavior, we can look at records of the arcs. Consider Example \ref{exmample:minnormfamily} above, but now with $m = \frac{2k - 6}{6}$ where $2k \equiv 0 \mod{6}$ and $2k \in [6, 500]$. The records of the first three members of this family are as follows.
 $$(3, 5, 3, 5, 4, 2, 4, 2, 4, 2, 4)$$
 $$\left(3, 5, 3, 5, \fbox{4, 1, 2, 4, 2, 4, 2, 5, 3, 5, 3, 5}, 4, 2, 4, 2, 4, 2, 4\right)$$
 $$\left(3, 5, 3, 5, \fbox{4, 1, 2, 4, 2, 4, 2, 5, 3, 5, 3, 5}, \fbox{4, 1, 2, 4, 2, 4, 2, 5, 3, 5, 3, 5}, 4, 2, 4, 2, 4, 2, 4\right)$$
The $m$th member is achieved by inserting the arc with record $4, 1, 2, 4, 
2, 4, 2, 5, 3, 5, 3, 5$ $m$-times. Another family of arcs that gives rise to the minimum norms of Theorem \ref{thm:periodicity} has the following records.
 $$(3, 5, 3, 5, 3, 5, 4, 2, 4, 2, 4)$$
 $$\left(\fbox{3, 5, 3, 5, 3}, \fbox{6, 5, 4, 2, 4, 2, 4}, 3, 5, 3, 5, 3, 5, 4, 2, 4, 2, 4\right)$$
 $$\left(\fbox{3, 5, 3, 5, 3}, \fbox{6, 5, 4, 2, 4, 2, 4}, \fbox{3, 5, 3, 5, 3}, \fbox{6, 5, 4, 2, 4, 2, 4}, 3, 5, 3, 5, 3, 5, 4, 2, 4, 2, 4\right)$$
Here the arcs $3, 5, 3, 5, 3$ and $6, 5, 4, 2, 4, 2, 4$ are interleaved. In each of these examples, the initial members of the family do not give rise to Burau polynomials with the same ordered set of coefficients. It is only the later members that have this behavior.

\/*\begin{center}
 \includegraphics[scale=0.25]{Family-simple01.png}
 \includegraphics[scale=0.25]{Family-simple02.png}
 \includegraphics[scale=0.25]{Family-simple03.png}
\end{center}

More complicated patterns also arise.

\begin{center}
 \includegraphics[scale=0.3]{Family-complex01.png}
 \includegraphics[scale=0.3]{Family-complex02.png}
 \includegraphics[scale=0.3]{Family-complex03.png}
\end{center}

In the family above, each additional step inserts two loops; one blue and one green.  Due to the way they are interleaved, the number of blue loops must equal that of green loops. 

While neither of these two families give rise to arcs that achieve minimum norm, the behavior of those that do is similar. We can see this by looking at the logs of the arcs. The first family pictured above has the following logs.
 $$(4)$$
 $$(4, 1, 2, 5, 4)$$
 $$(4, 1, 2, 5, 4, 1, 2, 5, 4)$$
 $$(4, 1, 2, 5, 4, 1, 2, 5, 4, 1, 2, 5, 4)$$
The insertion of the blue loop can be seen by the insertion of $1, 2, 5, 4$ in the log. The second family pictured above has the following logs.
 $$(5, 6, 7)$$
 $$(6, 1, 5, 6, 7, 6, 7)$$
 $$(6, 1, 6, 1, 5, 6, 7, 6, 7, 6, 7)$$
 $$(6, 1, 6, 1, 6, 1, 5, 6, 7, 6, 7, 6, 7, 6, 7)$$
The red arc is $5, 6, 7$, the blue arc is $6, 1$, and the green arc is $6, 7$.

The same behavior occurs in all of the minimum norm weights mentioned above. For example, consider the family ${8 + 2k, 4 + 10k, 5 + 12k, 7 + 2k, 24 + 24k}$ where we now set $k = \frac{c - 6}{6}$ with $c \equiv 0 \mod{6}$ and $c \in [6, 500]$. This is the family of minimum norm discussed above. The logs of the first three members of this family are as follows.
 $$(3, 5, 3, 5, 4, 2, 4, 2, 4, 2, 4)$$
 $$(3, 5, 3, 5, 4, 1, 2, 4, 2, 4, 2, 5, 3, 5, 3, 5, 4, 2, 4, 2, 4, 2, 4)$$
 $$(3, 5, 3, 5, 4, 1, 2, 4, 2, 4, 2, 5, 3, 5, 3, 5, 4, 1, 2, 4, 2, 4, 2, 5, 3, 
5, 3, 5, 4, 2, 4, 2, 4, 2, 4)$$
In this family, the $k$th member is achieved by inserting the arc $4, 1, 2, 4, 
2, 4, 2, 5, 3, 5, 3, 5$ $k$-times. The 
families of minimum norm also exhibit more complex patterns. For example, the weights $(20, 76, 91, 19, 192)$ occurs in a family whose logs are as follows.
 $$(3, 5, 3, 5, 3, 5, 4, 2, 4, 2, 4)$$
 $$(3, 5, 3, 5, 3, 6, 5, 4, 2, 4, 2, 4, 3, 5, 3, 5, 3, 5, 4, 2, 4, 2, 4)$$
 $$(3, 5, 3, 5, 3, 6, 5, 4, 2, 4, 2, 4, 3, 5, 3, 5, 3, 6, 5, 4, 2, 4, 2, 4, 3, 
5, 3, 5, 3, 5, 4, 2, 4, 2, 4)$$
Here the arcs $3, 5, 3, 5, 3$ and $6, 5, 4, 2, 4, 2, 4$ are interleaved.*/

Grouping the arcs into families, the periodicity observed in Theorem \ref{thm:periodicity} is recast into an observation about $6+18+18$ families. These families always being of minimal norm block us from seeing the behavior of other arcs. In order to see through this obstruction, we group all arcs into ``tightly-knit'' families and study the minimum norm where we ignore any noninitial member of such a family.

Let $\sF = \set{\beta_i}_{i=0}^\infty$ be a collection of arcs and set $\vec{d} = \vec{w}_{\beta_1} - \vec{w}_{\beta_0}$. We say that $\sF$ is a \emph{tightly-knit family} if for all $i \in \N$, $\vec{w}_{\beta_i} = \vec{w}_{\beta_0} + i \cdot \vec{d}$ and $p_{\beta_i}(t)$ has the same ordered set of coefficients as $p_{\beta_0}(t)$. We call this the \emph{tightly-knit family generated by $\beta_0$ and $\beta_1$}. For such a family we define $c(\sF) = c(\beta_0)$.

Note that any given arc can lie in many tightly-knit families. Let $\beta$ and $\beta'$ be two distinct arcs with the entries of $\vec{w}_{\beta'} - \vec{w}_{\beta}$ all nonnegative. Then $\beta$ and $\beta'$ lie in a tightly-knit family if and only if $\beta_0 = \beta$ and $\beta_1 = \beta'$ themselves generate a tightly-knit family. To check that $\beta$ and $\beta'$ belong to the same family in computations, we must restrict ourselves to testing the first $m \in \N$ members generated by $\beta$ and $\beta'$. In the results below, we first used $m=100$ and then $m=1000$. In each case, the results remain unchanged.

For $k \in \N$ we define 
 $$\min(2k) = \min_\beta \set{|p_\beta(t)| : c(\beta) = 2k \text{ and } \beta \notin \sF \text{ for any } \sF \text{ with } c(\sF) < 2k}.$$
Note that there exists a positive $k \in \N$ such that $\min(2k) = 0$ if and only if $\Psi_4$ is unfaithful. We emphasize that in computing $\min(2k)$ for $2k \in [2, 200]$, seen in Figure \ref{fig:tightlyKnitFamilies}, we only test the first $100$, and then $1000$, members to determine if a collection forms a tightly-knit family.

\/*\begin{figure}[!h]
  \centering
  \includegraphics[width=6cm]{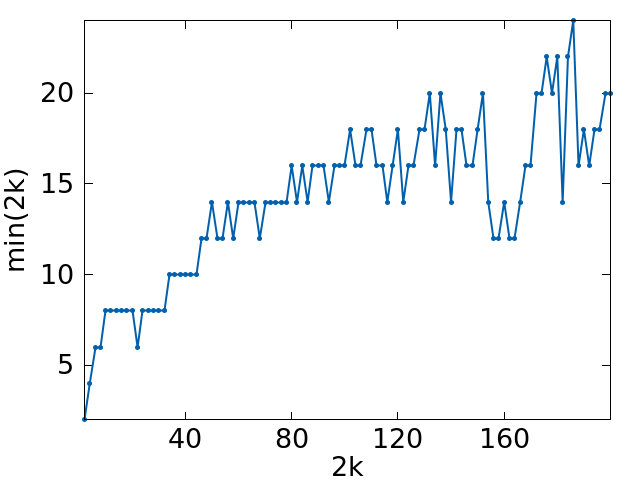}
  \captionof{figure}{Tightly-knit families}
  \label{fig:tightlyKnitFamilies}
\end{figure}*/

By weakening the definition of a tightly-knit family while maintaining the property that there exists a positive $k \in \N$ such that $\min(2k) = 0$ if and only if $\Psi_4$ is unfaithful, we can seek different ways to group the arcs into families, studying their behavior as the number of intersections increase. With this in mind, we say that $\sF = \set{\beta_i}_{i=0}^\infty$ is a \emph{loosely-knit} family if for all $i \in \N$, $\vec{w}_{\beta_i} = \vec{w}_{\beta_0} + i \cdot (\vec{w}_{\beta_1} - \vec{w}_{\beta_0})$ and $|p_{\beta_i}(t)| \geq |p_{\beta_0}(t)|$ for all $i \in \N$. The result of computing $\min(2k)$ for $2k \in [2, 200]$ using loosely-knit families is seen in Figure \ref{fig:looselyKnitFamilies}, again with the computational limitations as mentioned above.

\/*\begin{figure}[!h]
  \centering
  \includegraphics[width=6cm]{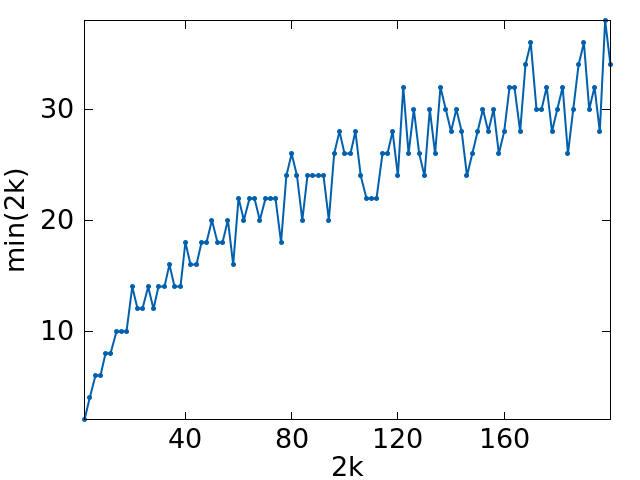}
  \captionof{figure}{Loosely-knit families}
  \label{fig:looselyKnitFamilies}
\end{figure}*/

\begin{figure}[!h]
 \centering
 \begin{subfigure}[t]{0.4\textwidth}
  \centering
  \includegraphics[width=2in]{SmartEqualCoeffCap200.png}  
  \caption{Tightly-knit families}
  \label{fig:tightlyKnitFamilies}
 \end{subfigure}
 \quad
 \begin{subfigure}[t]{0.4\textwidth}
  \centering
  \includegraphics[width=2in]{SmartAtLeastNorm.png} 
  \captionof{figure}{Loosely-knit families}
  \label{fig:looselyKnitFamilies}
 \end{subfigure}
 \caption{}
\end{figure}

We interpret the upward trend of these graphs as being suggestive that $\Psi_4$ is faithful.
  
 \newpage

\bibliographystyle{plain}
\bibliography{burau}

\end{document}